\theoremstyle{plain} 
\newtheorem{Thm}[equation]{Theorem}
\newtheorem*{Thm*}{Theorem} 
\newtheorem{Cor}[equation]{Corollary}
\newtheorem{Lem}[equation]{Lemma}
\theoremstyle{definition} 
\newtheorem*{Rmk}{Remark}
\numberwithin{equation}{section}
\newcounter{myenum}
\newenvironment{enum}{%
  \begin{list}{\arabic{myenum}.}%
    {\setlength{\leftmargin}{20pt}}%
     \setlength{\labelwidth}{0pt}
     \setlength{\itemindent}{0.5em}
     \setlength{\labelsep}{0.5em}
     \setlength{\itemsep}{0.2em}
     \usecounter{myenum}}%
  {\end{list}} 
\newcounter{myenumsec}
\newenvironment{enumsec}{%
  \begin{list}{\S\arabic{myenumsec}.}%
    {\setlength{\leftmargin}{13pt}}%
     \setlength{\labelwidth}{0pt}
     \setlength{\itemindent}{0.5em}
     \setlength{\labelsep}{0.5em}
     \setlength{\itemsep}{0.5em}
     \usecounter{myenumsec}}%
  {\end{list}}
\newcommand{\C}{\mathbb C}
\newcommand{\Q}{\mathbb{Q}}
\newcommand{\Z}{\mathbb{Z}}
\newcommand{\R}{\mathbb{R}}
\newcommand{\e}{\mathbf{e}}
\newcommand{\f}{\mathbf{f}}
\newcommand{\la}{\langle}
\newcommand{\ra}{\rangle}
\newcommand{\GL}{\operatorname{GL}}
\newcommand{\SL}{\operatorname{SL}}
\newcommand{\Sp}{\operatorname{Sp}}
\newcommand{\SO}{\operatorname{SO}}
\newcommand{\Spt}{\widetilde{\operatorname{Sp}}}
\newcommand{\chit}{\widetilde{\chi}}
\newcommand{\End}{\operatorname{End}}
\newcommand{\supp}{\operatorname{supp}}
\newcommand{\Ind}{\operatorname{Ind}}
\newcommand{\Hom}{\operatorname{Hom}}
\newcommand{\vol}{\operatorname{vol}}
\newcommand{\It}{\widetilde{I}}
\newcommand{\Jt}{\widetilde{J}}
\newcommand{\Kt}{\widetilde{K}}
\newcommand{\Bt}{\widetilde{B}}
\newcommand{\Tt}{\widetilde{T}}
\newcommand{\Pt}{\widetilde{P}}
\newcommand{\Mt}{\widetilde{M}}
\newcommand{\xt}{\widetilde{x}}
\newcommand{\htt}{\widetilde{h}}
\newcommand{\wt}{\widetilde{w}}
\renewcommand{\tt}{\widetilde{t}}
\newcommand{\uu}{\widetilde{u}}
\newcommand{\G}{\mathcal{G}}
\renewcommand{\SS}{\mathcal{S}}
\newcommand{\LL}{\mathcal{L}}
\newcommand{\Ok}{\mathcal{O}}
\newcommand{\tr}{\operatorname{tr}}
\newcommand{\HH}[3]{\mathcal{H}(#1 \! \sslash \! #2; #3)}
\title[Hecke Algebra Correspondences]{Hecke Algebra Correspondences\\
  for the metaplectic group} 
\author{Shuichiro Takeda}
\address{Mathematics Dept, Univ of Missouri-Columbia, 202
  Math Sciences Building, Columbia, MO, 65211}
\email{takedas@missouri.edu} 
\author{Aaron Wood}
\address{Mathematics Dept, Univ of Missouri-Columbia, 202
  Math Sciences Building, Columbia, MO, 65211}
\email{woodad@missouri.edu} 
\begin{document}

\maketitle

\begin{abstract} 
Over a $p$-adic field of odd residual characteristic, Gan and Savin proved a
correspondence between the Bernstein components
of the even and odd Weil representations of the metaplectic group and the components
of the trivial representation of the equal rank odd orthogonal groups. In
this paper, we extend their result to the case of even residual characteristic.
\end{abstract}

%%%%%%%%%%%%%%%%%%%%%%%%%%%%%%%%%%%%%%%%%%%%%%%%%%%%%%%%%%%%%%%%%%%

\section*{Introduction}

%%%%%%%%%%%%%%%%%%%%%%%%%%%%%%%%%%%%%%%%%%%%%%%%%%%%%%%%%%%%%%%%%%%

Fix a nonarchimedean local field $k$ of residue characteristic $p$ and
characteristic different from $2$.  Let
$W$ be a non-degenerate symplectic space over $k$ of dimension $2n$
and $\Spt(W)$ the $2$-fold metaplectic cover of $\Sp(W)$.  For an additive
character $\psi$ of $k$, let $\omega_\psi$ be 
the Weil representation of $\Spt(W)$, which decomposes into its even and
odd constituents, $\omega_\psi = \omega_\psi^+ \oplus \omega_\psi^-$.
In the category of genuine, smooth representations of
$\Spt(W)$, let $\G_\psi^\pm$ be the Bernstein component
containing $\omega_\psi^\pm$.

Consider quadratic spaces $V^\pm$ of
dimension $2n+1$ with trivial discriminant, where $V^+$ has the
trivial Hasse invariant and $V^-$ the non-trivial one. Then
$\SO(V^+)$ is the split adjoint group of type B$_ n$ and $\SO(V^-)$ is its unique non-split
inner form. In the category of smooth representations of
$\SO(V^\pm)$, let $\SS^\pm_0$ be the Bernstein component containing the trivial
representation of $\SO(V^\pm)$.  

Let $\epsilon$ be + or --.  In \cite{GS2}, Gan and Savin proved an equivalence of categories between $\G_\psi^\epsilon$ and 
$\SS_0^\epsilon$ assuming that $p \neq 2$.  
The aim of this paper is to extend their result to the case
of even residual characteristic.  We follow their general strategy of
exploiting minimal types of the Weil representation to define a Hecke algebra $H_\psi^\epsilon$,
showing that the category $\G_\psi^\epsilon$ is equivalent to the category of $H_\psi^\epsilon$-modules,
and giving an isomorphism between $H_\psi^\epsilon$ and the standard 
Iwahori-Hecke algebra of $\SO(V^\epsilon)$.

A key ingredient for extending their result is an analysis of the $K$-types of the Weil representation
in arbitrary residual characteristic which was carried out by Savin and the second-named author in
\cite{SW}.  We also employ the machinery of Bushnell, Henniart, and Kutzko in \cite{BHK} to 
compare the Plancherel measures induced from the respective Hecke algebras.
More explicitly, the layout of the paper is as follows.
\begin{enumsec}

\item 
We introduce notation and summarize some relevant background material.

\item
We describe a minimal type  for an open compact subgroup 
 and compute the corresponding
spherical Hecke algebra 
$H_\psi^\epsilon$.  We give an
isomorphism between $H_\psi^\epsilon$ and the standard Iwahori-Hecke algebra $H^\epsilon$ of 
$\SO(V^\epsilon)$.  We show that the isomorphism $H_\psi^\epsilon \cong H^\epsilon$ is, 
in fact, an isomorphism of 
Hilbert algebras with involution, thus giving a coincidence of induced Plancherel measures under
suitable normalization.  A corollary of this result is that the correspondence of
Hecke algebra modules preserves formal degree.

\item
We prove that the category of $H_\psi^\epsilon$-modules is equivalent to the category $\G_\psi^\epsilon$,
thus giving the desired equivalence of categories $\G_\psi^\epsilon \cong \SS_0^\epsilon$.  From the
theory of Plancherel measures, we deduce that this equivalence preserves the temperedness and 
square-integrability of representations.
\end{enumsec}

%%%%%%%%%%
\section*{Acknowledgements}

The authors would like to thank Gordan Savin for his insight and suggestions.  In particular, the idea
to use vector-valued Hecke algebras arose in conversations between him and the second-named author
while participating at the Oberwolfach workshop on ``Spherical Spaces and Hecke Algebras.''  The 
second-named author would
like to thank the Oberwolfach Research Institute for Mathematics (MFO) for fostering a stimulating
research environment.  The authors would also like to thank Wee Teck Gan for answering several questions regarding \cite{GS2}.

The first-named author was partially supported by NSF grant
DMS-1215419. 

Finally, the authors would like to thank the anonymous referee to make
various useful suggestions.
%%%%%%%%%%

%%%%%%%%%%
\section{Preliminaries}
%%%%%%%%%%

Throughout the paper, $k$ is a nonarchimedean local field with residual characteristic $p$;
we allow for arbitrary residual characteristic but assume that the characteristic of $k$ is
different from $2$.  Let $\Ok$ be the ring of integers and $\varpi$ a chosen uniformizer.
Denote by $q$ the
cardinality of the residue field and by $e$ the valuation of 2
in $k$.  If $p = 2$, then $e$ is the ramification index of 2; 
otherwise $e = 0$.
Let $\psi$ be a non-trivial additive character of $k$; for convenience, we
assume that $\psi$ has conductor $2e$,
i.e., that $4\Ok$ is the largest additive subgroup of $\Ok$ on which $\psi$
acts trivially.

For a vector space $V$ over $k$, we denote by $S(V)$ the Schwartz space of smooth, 
compactly supported, $\C$-valued functions on $V$.  We denote the subspaces 
of even and odd functions in $S(V)$ by $S(V)^+$ and $S(V)^-$, respectively.

%%%%%%%%%%
\subsection{The symplectic group $\Sp(W)$}
%%%%%%%%%%

Let $W$ be a non-degenerate symplectic space over $k$ of dimension
$2n$ with basis $\{\e_1,\dots,\e_n,\f_1,\dots,\f_n\}$, where
$\la\e_i,\e_j\ra=0=\la\f_i,\f_j\ra$ for all $i, j$ and
$\la\e_i,\f_j\ra=\delta_{i,j}$.  The symplectic group $\Sp(W)$ is the 
group of invertible transformations of $W$ which preserve
the symplectic form.  The decomposition $W = X + Y$, where $X$ is the span of the $\e_i$
and $Y$ is the span of the $\f_i$, is a polarization of $W$.  

Let $W_\C$ be the $\C$-span of the symplectic basis and $\mathfrak{sp}(W_\C)$ the 
symplectic Lie algebra, consisting of endomorphisms 
$T: W_\C \to W_\C$ such that $\la Tu, v \ra + \la u, Tv \ra = 0$ for all $u,v \in W_\C$.  
Let $\mathfrak{h}$ be the diagonal Cartan subalgebra relative to the symplectic basis and
$\mathfrak{h}^\ast = \Hom_\C ( \mathfrak{h}, \C )$ its linear dual.
The roots of $\mathfrak{sp}(W_\C)$ form a root system of type C$_n$, defined by
\[
	\Sigma = 
		\{\pm \epsilon_i \pm \epsilon_j : 1 \leq i < j \leq n\} 
			\cup 
		\{\pm 2\epsilon_i: 1\leq i\leq n\} 
			\subset \mathfrak{h}^\ast,
\] 
where $\epsilon_i: \mathfrak{h} \to \C$ is given by
\[
	H = \begin{pmatrix} a & \\ & -a \end{pmatrix} \mapsto \epsilon_i(H) = a_i.
\]
We take $\Delta=\{\alpha_1,\dots,\alpha_n\}$ as the set of
simple roots, where $\alpha_n = 2\epsilon_n$ and 
$\alpha_i=\epsilon_i-\epsilon_{i+1}$ otherwise.  This choice of simple roots
decomposes $\Sigma$ into positive roots $\Sigma^+$ and negative roots $\Sigma^-$.

Each root $\alpha \in \mathfrak{h}^\ast$ has a corresponding coroot $\check{\alpha} \in \mathfrak{h}$ such that $\alpha(\check{\alpha}) = 2$; the coroots form a root system of type B$_n$.  Denote by 
$\mathfrak{h}_\R$ the real span of the coroots.

Let $\Sigma_a = \{\alpha+m : \alpha \in \Sigma, m \in \Z\}$ be the set of affine roots, where $\alpha + m$ is the affine functional on $\mathfrak{h}_\R$ given by $(\alpha + m)(H) = \alpha(H) + m$.  
We take $\Delta_a=\Delta\cup\{\alpha_0\}$ to be the set of simple affine roots, where 
$\alpha_0 = -2\epsilon_1 + 1$.

For each affine root $\alpha + m$, define $s_{\alpha+m}$ to be the 
reflection across the affine hyperplane $P_{\alpha+m} = \{ x \in \mathfrak{h}_\R : \alpha(x)+m = 0 \}$.  
We write $s_i = s_{\alpha_i}$ for the simple
affine reflections across the affine hyperplanes $P_i = P_{\alpha_i}$.  
The affine space $\mathfrak{h}_\R$ is an apartment for $\Sp(W)$.  The chambers of the apartment 
are the connected components of $\mathfrak{h}_\R \smallsetminus \bigcup P_{\alpha+m}$.  For $n = 2$, 
the root system, coroot system, and apartment are sketched below.

\begin{center}
\begin{tikzpicture}
	\node [right] at (-2,1.5) {$\mathfrak{h}^\ast$};
	\draw [<->, thick] (-1,-1) -- (1,1);
	\draw [<->, thick] (-2,0) -- (2,0);
	\draw [<->, thick] (-1,1) -- (1,-1);
		\node [right] at (1,-1) {$\alpha_1$};
	\draw [<->, thick] (0,-2) -- (0,2);
		\node [right] at (0,2) {$\alpha_2$};
	
	\foreach \a in {4.5,5,5.5,6,6.5,7,7.5}
		{	\draw [gray] (\a,-1.75) -- (\a,1.75);		}
	\foreach \a in {-1.5,-1,-.5,0,.5,1,1.5}
		{	\draw [gray] (4.25,\a) -- (7.75,\a);		}
	\foreach \a in {0,1,2,3}
		{	\draw [gray] (4.25,-1.75+\a) -- (7.75-\a,1.75);		}
	\foreach \a in {1,2,3}
		{	\draw [gray] (4.25+\a,-1.75) -- (7.75,1.75-\a);		}
	\foreach \a in {0,1,2,3}
		{	\draw [gray] (4.25,1.75-\a) -- (7.75-\a,-1.75);		}
	\foreach \a in {1,2,3}
		{	\draw [gray] (4.25+\a,1.75) -- (7.75,-1.75+\a);		}
	\draw (4.25,-1.75) -- (7.75,1.75);
		\node [above right] at (7.75,1.75) {$P_1$};
	\draw (4.25,0) -- (7.75,0);
		\node [right] at (7.75,0) {$P_2$};
	\draw (6.5,-1.75) -- (6.5,1.75);
		\node [above] at (6.5,1.75) {$P_0$};
	\node [right] at (4.5,2) {$\mathfrak{h}_\R$};
	\draw [<->, thick] (5,-1) -- (7,1);
	\draw [<->, thick] (5,0) -- (7,0);
	\draw [<->, thick] (5,1) -- (7,-1);
		\node [right] at (7,-1) {$\check{\alpha}_1$};
	\draw [<->, thick] (6,-1) -- (6,1);
		\node [above] at (6,1) {$\check{\alpha}_2$};
\end{tikzpicture}
\end{center}

The Weyl group $\Omega$ is the group generated by the simple reflections $s_1, \dots, s_n$.  The affine
Weyl group $\Omega_a$ is the group generated by the affine simple reflections $s_0, s_1, \dots, s_n$;  
it is the semi-direct product $\Omega_a = D \Omega$ of a translation group $D$ and the 
Weyl group.  Both $\Omega$ and $\Omega_a$ are Coxeter groups whose braid relations are given according 
to the following Coxeter diagram.

\begin{center}
\begin{tikzpicture}[scale=.45]
    \draw (0,-.1) --(2,-.1);
    \draw (0,.1) --(2,.1);
    \draw (2,0) --(5,0);
    \draw (7,0) --(8,0);
    \draw (8,-.1) --(10,-.1);
    \draw (8,.1) --(10,.1);
    \draw[fill] (0,0) circle(5pt);
    \draw[fill] (2,0) circle(5pt);
    \draw[fill] (4,0) circle(5pt);
    \draw[fill] (8,0) circle(5pt);
    \draw[fill] (10,0) circle(5pt);
    \node[below] at (0,-.1) {$s_0$};
    \node[below] at (2,-.1) {$s_1$};
    \node[below] at (4,-.1) {$s_2$};
    \node[below] at (10,-.1) {$s_n$};
    \node at (6,0) {$\cdots$};
\end{tikzpicture}
\end{center}

For each root $\alpha\in \Sigma$, 
we fix a map
	$
		\Phi_\alpha:\SL_2(k)\rightarrow\Sp(W)
	$
such that the images of the unipotent upper and lower triangular
matrices in $\SL_2(k)$ are the root subgroups of $\Sp(W)$ corresponding
to $\alpha$ and $-\alpha$,
respectively.  For $\alpha+m \in \Sigma_a$, we
define the map $\Phi_{\alpha+m}:\SL_2(k)\rightarrow\Sp(W)$ by
	\[
		\Phi_{\alpha+m}\begin{pmatrix}a&b\\c&d\end{pmatrix} = 
		\Phi_\alpha\begin{pmatrix}a&\varpi^mb\\\varpi^{-m}c&d\end{pmatrix};
	\]
we write
	\begin{align*}
		x_{\alpha+m}(t) 
			&
			= \Phi_{\alpha+m} \begin{pmatrix} 1 & t \\ 0 & 1 \end{pmatrix} 
			= x_\alpha(\varpi^m t)
			\quad \quad (t \in k),
		\\
		w_{\alpha+m}(t) 
			&
			= \Phi_{\alpha+m}\begin{pmatrix}0&t\\-t^{-1}&0\end{pmatrix} 
			= w_\alpha(\varpi^m t)
			\quad \quad (t \in k^\times),
		\\	
		h_{\alpha+m}(t) 
			&
			= \Phi_{\alpha+m}\begin{pmatrix}t&0\\0&t^{-1}\end{pmatrix} 
			= h_\alpha(t)
			\quad \quad (t \in k^\times).
	\end{align*}
		
We take the element $w_{\alpha_i}(1)$ as a representative in $\Sp(W)$ of the simple affine 
reflection $s_i$.  We will frequently use the same notation to refer to an element
$w = s_{i_1} \cdots s_{i_r}$ in $\Omega_a$ and its representative $w = w_{\alpha_{i_1}}(1) \cdots w_{\alpha_{i_r}}(1)$ in $\Sp(W)$.

%%%%%%%%%%
\subsection{Open compact subgroups of $\Sp(W)$}
%%%%%%%%%%

For $0 \le i \le n$, we define the lattice 
\[
\LL_i=\operatorname{Span}_{\Ok}\{\e_1,\dots,\e_n, \varpi\f_1, \dots,
\varpi\f_i, \f_{i+1},\dots,\f_n\}.
\]
The stabilizer
$
K_i=\{g\in\Sp(W):g\LL_i\subseteq \LL_i\}
$
is a maximal open compact subgroup of $\Sp(W)$; it is the group generated
by those $\Phi_{\alpha_j}(\Ok)$ for which $j \neq i$; it is also the stabilizer
of the point $z_i$ in the apartment, where
\[
	z_0 = (0, 0, \ldots, 0),
		\quad
	z_1 = (\tfrac{1}{2}, 0, \ldots, 0), 
		\quad \ldots, \quad
	z_n = (\tfrac{1}{2}, \tfrac{1}{2}, \ldots, \tfrac{1}{2}).
\]
In this way, each $K_i$ corresponds to the vertex $z_i$ in the `standard' apartment;
hence, every maximal open compact subgroup of $\Sp(W)$ is conjugate to one of the $K_i$, 
cf.\ \cite[\S 3.2]{Ti}.

The intersection of $K_0, \dots, K_n$ 
is an Iwahori subgroup $I$.  The unipotent radical of $I$ is generated by the simple affine root groups $\Phi_{\alpha_i}(\SL_2(\Ok))$.  In this way, the Iwahori subgroup $I$ corresponds to the chamber 
in the apartment which is bounded by the hyperplanes $P_0, \dots, P_n$; in addition, the vertices of the
chamber are precisely $z_0, \ldots, z_n$.  The rank 2 picture is as follows.

\begin{center}
\begin{tikzpicture}[scale=2.5]
	\draw [fill, gray] (0,0) -- (.5,0) -- (.5,.5) -- (0,0);
	\draw (-.25,0) -- (1.25,0);
	\draw (-.25,-.25) -- (1.25,1.25);
	\draw [gray] (0,-.25) -- (0,1.25);
	\draw (.5,-.25) -- (.5,1.25);
	\draw [gray] (-.25,1.25) -- (1.25,-.25);
	\draw [gray] (.75,1.25) -- (1.25,.75);
	\draw [gray] (-.25,.75) -- (.25,1.25);
	\draw [gray] (.75,-.25) -- (1.25,.25);
	\draw [gray] (1,-.25) -- (1,1.25);
	\draw [gray] (-.25,.5) -- (1.25,.5);
	\draw [gray] (-.25,1) -- (1.25,1);
	\draw [->, thick] (-.25,0) -- (1,0);
	\draw [->, thick] (-.25,-.25) -- (1,1);
	\draw [->, thick] (0,-.25) -- (0,1);
	\draw [thick] (-.25,.25) -- (.25,-.25);
	\draw [fill] (0,0) circle(.025);
		\node at (-.08,.2) {$z_0$};
	\draw [fill] (.5,0) circle(.025);
		\node [above right] at (.5,0) {$z_1$};
	\draw [fill] (.5,.5) circle(.025);
		\node [above left] at (.5,.5) {$z_2$};
	\node [right] at (1.25,0) {$P_2$};
	\node [above right] at (1.25,1.25) {$P_1$};
	\node [above] at (.5,1.25) {$P_0$};
\end{tikzpicture}
\end{center}

The double cosets in 
$I\,\backslash\!\Sp(W)\slash I$ are parameterized by the affine Weyl group; namely, each 
$I$-double coset is of the form $IwI$ for some $w \in \Omega_a$.  The number of $I$-single cosets
in $IwI$ is 
\[
[IwI: I]=q^{\ell(w)},
\]
where $\ell$ is the length function on $\Omega_a$.

%%%%%%%%%%
\subsection{Metaplectic group and the Weil representation}
\label{S:Weil}
%%%%%%%%%%

For a polarization $W = X + Y$, the Schwartz space $S(Y)$ realizes the unique (up to isomorphism)
representation $\rho_\psi$ of the Heisenberg group with the central
character $\psi$. Via the action of $\Sp(W)$ on the Heisenberg group,
$\rho_\psi$ gives a projective representation of $\Sp(W)$ which lifts to
a linear representation $\omega_\psi$, called the Weil representation, 
of the central extension $\Sp'(W)$ of $\Sp(W)$ given by 
\[
1\rightarrow\C^\times\rightarrow\Sp'(W)\rightarrow\Sp(W)\rightarrow 1.
\]
It is a theorem of Weil that the derived group of $\Sp'(W)$ is a 2-fold
cover $\Spt(W)$ of $\Sp(W)$ and that $\omega_\psi$ is a faithful representation of $\Spt(W)$, 
cf.\ \cite[IV.42-43]{We}, \cite[2.II.1]{MVW}.
 
For a subgroup $H\subseteq \Sp(W)$, we denote its preimage in $\Spt(W)$ by
$\widetilde{H}$.  For each root $\alpha$,  the element $x_\alpha(t)$
canonically lifts to an element $\xt_\alpha(t)$ in $\Spt(W)$. We may therefore define lifts of
$w_\alpha(t)$ and $h_\alpha(t)$ via the formulas
\begin{align*}
\wt_\alpha(t)&=\xt_\alpha(t)\xt_{-\alpha}(-t^{-1})\xt_\alpha(t),\\
\htt_\alpha(t)&=\wt_\alpha(t)\wt_\alpha(-1).
\end{align*}

We will take $\wt_{\alpha_i}(1)$ for a representative in $\Spt(W)$ of the affine simple reflection $s_i$.
We will continue to abuse notation when referring to an element 
of $\Omega_a$ or its representatives
in either $\Sp(W)$ or $\Spt(W)$.

Explicitly, the Weil representation on $S(Y)$ is given by
\begin{align*}
	\big[ \xt(a) \phi \big](y)	&	=	\psi( {}^t\!y a y ) \phi(y),					\\
	\big[ \htt(a) \phi \big](y)	&	=	\beta_a |\det a|^{1/2} \phi( {}^t\!y ),		\\
	\big[ \wt \phi \big](y)		&	=	\gamma_1 \widehat{\phi}(y).
\end{align*}
Here, $\xt(a)$, $\htt(a)$, and $\wt$ are respective lifts of
\[
	\begin{pmatrix} 1 & a \\ 0 & 1 \end{pmatrix},
		\quad \quad
	\begin{pmatrix} a & 0 \\ 0 & {}^t a^{-1} \end{pmatrix},
		\quad \text{ and } \quad
	\begin{pmatrix} 0 & 1 \\ -1 & 0 \end{pmatrix};
\]
the Fourier transform of $\phi$ is defined by
\[
	\widehat\phi(y) = \int_Y \psi(2\, {}^t\!u y) \phi(u) du;
\]
and $\beta_a$ and $\gamma_1$ are specific 8th roots of unity, whose precise value plays no role in the current investigation.

Under the Weil representation $\omega_\psi$, the (positive) root groups act as follows:
\begin{align*}
	\big[ \xt_{\epsilon_i - \epsilon_j}(t) \phi \big] (y) 	&	= \phi(y + t y_i \f_j),			\\
	\big[ \xt_{\epsilon_i + \epsilon_j}(t) \phi \big] (y)	&	= \psi(2 t y_i y_j) \phi(y),	\\
	\big[ \xt_{2\epsilon_i}(t) \phi \big] (y)					&	= \psi( t y_i^2 ) \phi(y).
\end{align*}

%%%%%%%%%%
\subsection{Minimal types of the Weil representation}
%%%%%%%%%%

Realized as a representation of $S(Y)$, the Weil representation
$\omega_\psi$ decomposes into the sum of even and odd functions, $\omega_\psi^+\oplus\omega_\psi^-$.
We consider the lattices $L_i = \LL_i \cap Y$.  As computed in \cite{SW}, $\Kt_i$ acts on 
$
\tau_i = S( L_0 / 2L_i ),
$
viewed naturally as a subspace of $S(Y)$. 
The space $\tau_0$ consists entirely of even functions and is an irreducible $\Kt_0$-module.  
Otherwise, as a $\Kt_i$-module, $\tau_i$ 
decomposes as $\tau_i^+ \oplus \tau_i^-$.  Each $\tau_i^\pm$ 
admits a tensor product structure,
\[
\tau_i^\pm = S( \Ok \f_1 / 2 \varpi \Ok \f_1 )^\pm \otimes
			\cdots \otimes
			S( \Ok \f_i / 2 \varpi \Ok \f_i )^\pm \otimes
			S( \Ok \f_{i+1} / 2 \Ok \f_{i+1} ) \otimes
			\cdots \otimes
			S( \Ok \f_n / 2 \Ok \f_n ),
\]
hence the dimension of $\tau_i^\pm$ is $\tfrac{1}{2}q^{en}(q^i \pm 1)$.   We note that
$\tau_i\subseteq\tau_{i+1}$ for $0 \leq i < n$.

We also note that, for $1 \le i \le n-1$, the simple affine reflection $s_i$ essentially acts on $\phi \in S(Y)$ by interchanging the $i$th and $(i+1)$th components; the reflection $s_n$ acts essentially via Fourier transform on the $n$th component; the reflection $s_0$ acts on the first component $\phi_1$ of $\phi$ as
\[
	\big[s_0 \phi_1 \big](y_1) = c\widehat\phi_1(\varpi^{-1}y_1),
\]
for some constant $c$.  

Lastly, we note that the Iwahori group $\It = \Kt_0 \cap \dots \cap \Kt_n$ is contained in each $\Kt_i$, hence it preserves each of the minimal types $\tau_i^\pm$.  Similarly, the group $\Jt = \Kt_1 \cap \dots \cap \Kt_n$ is contained in $\Kt_i$ for $i \ne 0$, so it preserves the minimal type $\tau_i^\pm$ for $i \ne 0$.

We record these observations in the following lemma.

\begin{Lem}
\label{L:inflate}
The Iwahori group $\It$ preserves $\tau_i^\pm$ for $0 \le i \le n$, and the group 
$\Jt$ preserves $\tau_i^\pm$ for $1 \le i \le n$.  Moreover,
\begin{enum}

\item
the elements $s_1, \dots, s_n$ preserve $\tau_0$ while $s_0$ inflates $\tau_0$ to $\tau_1^+$;

\item
the elements $s_2, \dots, s_n$ preserve $\tau_1^-$ while $s_1$ inflates $\tau_1^-$ to $\tau_2^-$.

\end{enum}
\end{Lem}

%%%%%%%%%%
\subsection{Spherical Hecke algebras}\label{S:Hecke}
%%%%%%%%%%

We summarize some generalities on Hecke algebras, most of which may be found in \cite{GS2}.

Let $G$ be a totally disconnected topological group and $K\subseteq G$
an open compact subgroup; fix a Haar measure $dg$ on $G$. 
For an irreducible, finite-dimensional representation
$(\sigma, V_\sigma)$ of $K$, let $(\sigma^*, V_\sigma^*)$ be its contragredient
representation and define the $\sigma$-spherical Hecke algebra by
	\[
		\HH{G}{K}{\sigma}
			= 	
				\left\{ 
					f : G \to \End(V_\sigma^*) : 
					\begin{array}{l}
						f \text{ is smooth and compactly supported},	\\
						f(k_1 g k_2) = \sigma^*(k_1) f(g) \sigma^*(k_2),
							\text{ for }k_i \in K, g \in G 
					\end{array}
				\right\};
	\]
it is an algebra under convolution with an identity element which we denote $1_\sigma$. 

For a smooth
representation $(\pi, V_\pi)$ of $G$, consider the space $(V_\pi\otimes V_\sigma^*)^K$ of
$K$-fixed vectors in $V_\pi\otimes V_\sigma^*$; this space admits a natural action of $\HH{G}{K}{\sigma}$ by
\[
\pi(f)(v\otimes e)=\int_G\pi(g)v\otimes f(g)e\;dg,
\]
where $v\otimes e\in(V_\pi\otimes V_\sigma^*)^K$ and $f\in \HH{G}{K}{\sigma}$.

Let $\Gamma$ be an open compact subgroup of $G$ containing $K$; assume that the index $[\Gamma:K]$
is finite.  We consider $\HH{\Gamma}{K}{\sigma}$ as a finite-dimensional subalgebra of 
$\HH{G}{K}{\sigma}$ via
\[
\HH{\Gamma}{K}{\sigma}
	= \{f\in \HH{G}{K}{\sigma}: \supp(f)\subseteq\Gamma\}.
\]
We have a natural isomorphism
$
\xymatrix@1{
L: \HH{\Gamma}{K}{\sigma}\ar[r]^-{\sim}&\End_{\Gamma}(\Ind_K^\Gamma(\sigma^*))
}
$
given by
\[
(L(f)\phi)(g)=\int_\Gamma f(h)\phi(h^{-1}g)\,dh
\]
for $f\in \HH{\Gamma}{K}{\sigma}$, $\phi\in \Ind_K^\Gamma(\sigma^*)$, and
$g\in\Gamma$. 

Suppose that $(\pi, V_\pi)$ is an irreducible, smooth, finite dimensional
representation of $\Gamma$ such that $(V_\pi\otimes V_\sigma^*)^K\neq 0$. Then
$(V_\pi \otimes V_\sigma^*)^K$ is a simple $\HH{\Gamma}{K}{\sigma}$-module via the action 
of $\HH{G}{K}{\sigma}$.  Now assume $\HH{\Gamma}{K}{\sigma}$ is commutative. Then 
$(V_\pi\otimes V_\sigma^*)^K$ is one dimensional, and the action of $\HH{\Gamma}{K}{\sigma}$ 
factors through a maximal ideal $\mathfrak{m}\subseteq \HH{\Gamma}{K}{\sigma}$. Moreover
\[
\Ind_K^\Gamma(\sigma^*)\slash \left(L(\mathfrak{m})\cdot
\Ind_K^\Gamma(\sigma^*)\right)\cong \pi^*.
\]
Therefore if $(\pi_1, V_1),\dots,(\pi_l, V_r)$ are the irreducible
representations (up to isomorphism) of $\Gamma$ such that
$(V_i\otimes V_\sigma^*)^K\neq 0$, then we have
\[
\Ind_K^\Gamma(\sigma^*)\cong \pi_1^*\oplus\cdots\oplus \pi_r^*.
\]
For each $f\in \HH{\Gamma}{K}{\sigma}$, the trace of $L(f)$ is
$
\lambda_1 d_1 + \cdots + \lambda_r d_r,
$
where $d_i=\dim V_i$ and $\lambda_i=\pi_i(f)$.  If $f$ is not
supported on $K$, then the trace of $L(f)$ is $0$.  The case of $r=2$ is summarized 
by the following lemma.

\begin{Lem}\label{L:Hecke}
Suppose that $\dim \HH{\Gamma}{K}{\sigma}=2$ with $T \in \HH{\Gamma}{K}{\sigma}$ not supported
on $K$.  Let $(\pi_i, V_i)$, for $i = 1,2$, be the two irreducible
representations (up to isomorphism) of $\Gamma$ such that $(V_i\otimes
V_\sigma^*)^K\neq 0$.  Write $d_i = \dim V_i$ and $\lambda_i = \pi_i(T)$.  Then
\begin{enum}
\item $\lambda_1 d_1 + \lambda_2 d_2 = 0$;
\item the dimension of $\Ind_K^\Gamma(\sigma^*)$ is $d=d_1+d_2$;
\item the minimal polynomial of $T$ is $(T-\lambda_1)(T-\lambda_2)=0$.
\end{enum}
\end{Lem}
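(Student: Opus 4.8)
The plan is to read off all three assertions from the general facts assembled just before the statement, specialized to the case of exactly two constituents. The one preliminary point to settle is that $\HH{\Gamma}{K}{\sigma}$ is commutative, since the cited facts require this. Because $1_\sigma$ is supported on $K$ while $T$ is not, these two elements are linearly independent and hence span the two-dimensional algebra; as $1_\sigma$ is central and $T$ commutes with itself, the algebra is commutative. This licenses the earlier discussion: each $(V_i \otimes V_\sigma^*)^K$ is one dimensional, so each $\pi_i^*$ occurs with multiplicity one in $\Ind_K^\Gamma(\sigma^*)$, and counting dimensions through the isomorphism $L$ together with Schur's lemma gives $\dim \HH{\Gamma}{K}{\sigma} = r$, the number of isomorphism classes of constituents. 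Hence the hypothesis $\dim \HH{\Gamma}{K}{\sigma} = 2$ forces exactly $r = 2$, as in the statement.

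For (1), I would invoke the trace formula recalled in the preliminaries: the trace of $L(T)$ equals $\lambda_1 d_1 + \lambda_2 d_2$. On the other hand, $T$ is not supported on $K$, so the same discussion gives that this trace vanishes, whence $\lambda_1 d_1 + \lambda_2 d_2 = 0$. Part (2) is then the dimension count in the decomposition $\Ind_K^\Gamma(\sigma^*) \cong \pi_1^* \oplus \pi_2^*$, which yields $d = d_1 + d_2$.

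For (3), I would use that, transported through the algebra isomorphism $L$, the element $T$ acts on the summand $\pi_i^*$ of $\Ind_K^\Gamma(\sigma^*)$ by the scalar $\lambda_i$ through which $\HH{\Gamma}{K}{\sigma}$ acts on the one-dimensional module $(V_i \otimes V_\sigma^*)^K$. Consequently $L(T)$ is semisimple with eigenvalues $\lambda_1, \lambda_2$, so $(T - \lambda_1)(T - \lambda_2)$ annihilates $T$; since $L$ is an algebra isomorphism, the minimal polynomial of $T$ agrees with that of $L(T)$. To see that this product is the minimal polynomial rather than a proper factor, I would check that $\lambda_1 \neq \lambda_2$: the two constituents are non-isomorphic, so they determine distinct characters of the commutative algebra $\HH{\Gamma}{K}{\sigma}$; as both characters send $1_\sigma$ to $1$ and $\{1_\sigma, T\}$ is a basis, they must differ on $T$, i.e. $\lambda_1 \neq \lambda_2$.

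The content is essentially bookkeeping, so there is no serious obstacle; the only step requiring care is the last one, namely verifying $\lambda_1 \neq \lambda_2$ so that the minimal polynomial genuinely has degree two, matching $\dim \HH{\Gamma}{K}{\sigma} = 2$, rather than collapsing to a linear factor.
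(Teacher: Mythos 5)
Your proposal is correct and follows essentially the same route as the paper, which presents this lemma as a direct summary of the immediately preceding discussion (the decomposition $\Ind_K^\Gamma(\sigma^*)\cong\pi_1^*\oplus\cdots\oplus\pi_r^*$, the trace formula $\tr L(f)=\sum_i\lambda_i d_i$, and its vanishing for $f$ not supported on $K$) specialized to $r=2$. Your two added observations --- that $\dim\HH{\Gamma}{K}{\sigma}=2$ forces commutativity since the algebra is spanned by $1_\sigma$ and $T$, and that $\lambda_1\neq\lambda_2$ because the two characters agree on $1_\sigma$ and so must differ on $T$ --- are exactly the points the paper leaves implicit, and both are argued correctly.
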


For the groups and representations we will consider (specifically, representations on $\C$-vector spaces of connected reductive $k$-groups and their central extensions), there is additional structure on $\HH{G}{K}{\sigma}$, namely the $*$-operation, 
$f^\ast(g) = \overline{f(g^{-1})}$, and the trace operation, $\tr(f) = f(1)$.  
Following \cite[\S4.1]{BHK}, $\HH{G}{K}{\sigma}$ is a normalized Hilbert algebra with involution 
$f \mapsto f^*$ and scalar product	 
	\[
		[ f_1, f_2 ] = \frac{\vol(K)}{\dim \sigma} \tr (f_1^* f_2).
	\]
This structure yields a Plancherel formula on $\HH{G}{K}{\sigma}$: there is a positive
Borel measure $\mu_\sigma$ on the $C^*$-algebra completion 
$C^*(K,\sigma)$ of 
$\HH{G}{K}{\sigma}$
such that
	\[
		[f, 1_\sigma ] 
			= \int_{\widehat{C^*}(K,\sigma)} 
					\tr \pi(f) \operatorname{d}\!\hat{\mu}_\sigma(\pi).
	\]
Note that $\mu_\sigma$ depends on the chosen Haar measure of $G$.
	
We now consider this situation for two such groups, $G_1$, $G_2$.  For $i = 1,2$, fix an open compact subgroup 
$K_i \subseteq G_i$, an irreducible smooth representation $\sigma_i$ of $K_i$, and a Haar measure
$\mu_i$ of $G_i$. Let $\widehat{\mu}_i$  be the Plancherel measure 
on $\widehat{G}_i$ with respect to the Haar measure $\mu_i$;
following the notation of \cite{BHK} we denote by
$_r \widehat{G}_i$ the support of $\widehat{\mu}_i$.  We write 
$_r \widehat{G}_i(\sigma_i)$ for
the subspace of $_r \widehat{G}_i$ consisting of the representations $\pi$ for which 
$(\pi \otimes \sigma_i^*)^{K_i} \neq 0$.

From \cite[\S5.2]{BHK}, if we have an isomorphism of Hecke algebras
	\[
		\alpha : \HH{G_1}{K_1}{\sigma_1} \to \HH{G_2}{K_2}{\sigma_2}
	\]
such that, for all $f \in \HH{G_1}{K_1}{\sigma_1}$,
\begin{enum}
	\item
	$\alpha(f^*) = \alpha(f)^*$, and
	
	\item
	$\tr(f) = 0$ implies $\tr \big( \alpha(f) \big) = 0$,
\end{enum}
then it is an isomorphism of Hilbert algebras.  We then apply \cite[Cor. C, p.57]{BHK}.  

\begin{Lem}
\label{L:Transfer}
An isomorphism 
	\[
		\alpha: \HH{G_1}{K_1}{\sigma_1} \to \HH{G_2}{K_2}{\sigma_2}
	\]
of Hilbert algebras induces a homeomorphsim
	\[
		\widehat\alpha : {_r} \widehat{G}_2(\sigma_2) \to {_r} \widehat{G}_1(\sigma_1)
	\]
such that
	\[
		\frac{\mu_1(K_1)}{\dim \sigma_1} \widehat{\mu}_1(\widehat\alpha(S))
			= \frac{\mu_2(K_2)}{\dim \sigma_2} \widehat{\mu}_2(S)
	\]
for any Borel subset $S$ of $_r \widehat{G}_2(\sigma_2)$.
\end{Lem}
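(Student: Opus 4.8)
The plan is to deduce this as a direct consequence of the $C^*$-algebraic machinery of \cite[\S5]{BHK}, the essential input being \cite[Cor.~C, p.~57]{BHK}. Recall from the preceding discussion that each $\HH{G_i}{K_i}{\sigma_i}$ is a normalized Hilbert algebra under the involution $f \mapsto f^*$ and the scalar product $[f_1, f_2] = \frac{\mu_i(K_i)}{\dim \sigma_i} \tr(f_1^* f_2)$, so that one may form its $C^*$-completion $C^*(K_i, \sigma_i)$. The foundational results of \cite{BHK} identify the spectrum $\widehat{C^*}(K_i, \sigma_i)$ with $_r\widehat{G}_i(\sigma_i)$, and identify the Plancherel measure $\mu_{\sigma_i}$ appearing in the formula $[f, 1_{\sigma_i}] = \int \tr \pi(f) \operatorname{d}\!\hat{\mu}_{\sigma_i}(\pi)$ with the restriction of $\frac{\mu_i(K_i)}{\dim\sigma_i}\widehat{\mu}_i$ to $_r\widehat{G}_i(\sigma_i)$.

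By hypothesis, $\alpha$ is an isomorphism of Hilbert algebras, so it intertwines the involutions and preserves the normalized scalar products. Consequently $\alpha$ is isometric for the $C^*$-norms and extends uniquely to an isometric $*$-isomorphism $C^*(K_1, \sigma_1) \to C^*(K_2, \sigma_2)$. Dualizing this $*$-isomorphism produces a homeomorphism of the two spectra which, under the identifications recalled above, is precisely the desired map $\widehat\alpha: {_r}\widehat{G}_2(\sigma_2) \to {_r}\widehat{G}_1(\sigma_1)$.

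It then remains to transport the Plancherel measures. Because $\alpha$ is a normalized isomorphism of Hilbert algebras, \cite[Cor.~C, p.~57]{BHK} asserts that $\widehat\alpha$ carries $\mu_{\sigma_2}$ to $\mu_{\sigma_1}$, i.e.\ $\widehat\alpha_* \mu_{\sigma_2} = \mu_{\sigma_1}$ as measures on $_r\widehat{G}_1(\sigma_1)$. Unwinding the identification of each $\mu_{\sigma_i}$ with $\frac{\mu_i(K_i)}{\dim\sigma_i}\widehat{\mu}_i$ from the first paragraph, this is exactly the claimed equality for every Borel subset $S$ of $_r\widehat{G}_2(\sigma_2)$.

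The homeomorphism itself is formal once the $C^*$-completions are in hand, so I expect the main obstacle to lie in the bookkeeping of the normalizing factors $\mu_i(K_i)/\dim\sigma_i$. The delicate step is matching the conventions of \cite{BHK}—their normalization of the scalar product and their definition of the Plancherel measure on the $C^*$-completion—against the global Plancherel measure $\widehat{\mu}_i$ on $\widehat{G}_i$, so that the abstract measure transported by $\alpha$ genuinely corresponds to the weighted restriction of $\widehat{\mu}_i$ in each case.
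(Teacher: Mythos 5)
Your proposal is correct and follows the same route as the paper, which simply invokes \cite[Cor.~C, p.~57]{BHK} after observing that the Hecke algebras in question are normalized Hilbert algebras; your write-up just unpacks that citation (the $C^*$-completion, the identification of the spectrum with $_r\widehat{G}_i(\sigma_i)$, and the matching of $\mu_{\sigma_i}$ with the weighted restriction of $\widehat{\mu}_i$) in more detail than the paper does.
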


In the latter sections, we will apply this lemma with $G_1 = \Spt(W)$.  Strictly speaking,
the groups considered in \cite{BHK} are connected, reductive $k$-groups; however, there is
no obstruction in extending this result to the metaplectic group.

%%%%%%%%%%%%%%%%%%%%%%%%%%%%%%%%%%%%%%%%%%%%%%%%%%%%%%%%%%%%%%%%%%%

\section{Hecke algebra isomorphisms}

%%%%%%%%%%%%%%%%%%%%%%%%%%%%%%%%%%%%%%%%%%%%%%%%%%%%%%%%%%%%%%%%%%%

In this section, we define our Hecke algebras
$H_\psi^\pm$ of $\Spt(W)$ and
show that they are isomorphic to the affine Hecke algebras 
$H^\pm$ of $\SO(V^\pm)$.

%%%%%%%%%%%%%%%%%%%%%%%%%%%%%%%%%%%%%%%%%%%%%%%%%%%%%%%%%%%%%%%%%%%

\subsection{Hecke algebra of $\SO(V^+)$}

%%%%%%%%%%%%%%%%%%%%%%%%%%%%%%%%%%%%%%%%%%%%%%%%%%%%%%%%%%%%%%%%%%%

Let $V^+$ be a quadratic space of dimension $2n+1$ with trivial discriminant and trivial Hasse
invariant; then $\SO(V^+)$ is a split, adjoint, orthogonal 
group of type B$_n$.  Let $I^+$ and $\Omega_a^+$ denote its Iwahori subgroup and affine Weyl 
group, respectively.  The standard Iwahori-Hecke algebra is the set of smooth,
compactly-supported $I^+$\!-bi-invariant functions on $\SO(V^+)$, 
	\[
		H^+ = \HH{\SO(V^+)}{I^+}{\mathbf{1}}.
	\] 
For each $w \in \Omega_a^+$, take $U_w$ to be the characteristic function on the double coset 
$I^+ w I^+$.  The collection $\{U_w\}$ forms a basis of $H^+$ as a vector space.  As an 
algebra, $H^+$ is generated by elements $U_0, \dots, U_n$, and $\sigma$, where $U_i = U_{w_i}$ 
for $w_i$ a simple affine reflection in $\Omega_a^+$, and $\sigma$ is the outer automorphism 
which exchanges the nodes on the Coxeter diagram corresponding to $U_0$ and $U_1$.  The 
quadratic relations for the $U_i$ are
	\[	(U_i + 1) (U_i - q) = 0,		\]
and the braid relations are given by the affine diagram of type B$_n$. 
\begin{center}
\begin{tikzpicture}[scale=.45]
    \draw (0,-1) --(2,0);
    \draw (0,1) --(2,0);
    \draw (2,0) --(3,0);
    \draw (5,0) --(6,0);
    \draw (6,-.1) --(8,-.1);
    \draw (6,.1) --(8,.1);
    \draw[fill] (0,1) circle(5pt);
    \draw[fill] (0,-1) circle(5pt);
    \draw[fill] (2,0) circle(5pt);
    \draw[fill] (6,0) circle(5pt);
    \draw[fill] (8,0) circle(5pt);
    \node[above] at (0,1.1) {$U_0$};
    \node[below] at (0,-1.1) {$U_1$};
    \node[below] at (2,-.1) {$U_2$}; 
    \node[below] at (8,-.1) {$U_n$};
    \node at (4,0) {$\cdots$};
    \draw[<->] (0,-.6) --(0,.6);
    \node[left] at (0,0) {$\sigma$};
\end{tikzpicture}
\end{center}
For details, see \cite[\S3]{IM}.

Noting that $\sigma^2 = 1$ and $\sigma U_1 \sigma = U_0$, we see that $U_0$ is abstractly 
unnecessary as a generator.  Hence, $H^+$ is generated by $\sigma$, $U_1, \dots, U_n$ 
subject to the quadratic relations,
	\[	(\sigma + 1)(\sigma - 1) = 0
			\quad \text{ and } \quad
		(U_i + 1)(U_i - q) = 0,	\]
and the braid relations given by the affine diagram of type C$_n$.
\begin{center}
\begin{tikzpicture}[scale=.45]
    \draw (0,-.1) --(2,-.1);
    \draw (0,.1) --(2,.1);
    \draw (2,0) --(5,0);
    \draw (7,0) --(8,0);
    \draw (8,-.1) --(10,-.1);
    \draw (8,.1) --(10,.1);
    \draw[fill] (0,0) circle(5pt);
    \draw[fill] (2,0) circle(5pt);
    \draw[fill] (4,0) circle(5pt);
    \draw[fill] (8,0) circle(5pt);
    \draw[fill] (10,0) circle(5pt);
    \node[below] at (0,-.1) {$\sigma$};
    \node[below] at (2,-.1) {$U_1$};
    \node[below] at (4,-.1) {$U_2$}; 
    \node[below] at (10,-.1) {$U_n$};
    \node at (6,0) {$\cdots$};
\end{tikzpicture}
\end{center}

%%%%%%%%%%%%%%%%%%%%%%%%%%%%%%%%%%%%%%%%%%%%%%%%%%%%%%%%%%%%%%%%%%%

\subsection{$\tau_0$-spherical Hecke algebra of $\Spt(W)$}

%%%%%%%%%%%%%%%%%%%%%%%%%%%%%%%%%%%%%%%%%%%%%%%%%%%%%%%%%%%%%%%%%%%

The restriction of the minimal type $\tau_0$ from $\Kt_0$ to the
Iwahori subgroup $\It$ remains irreducible, as shown in \cite{SW}.  In
this section, we compute the $\tau_0$-spherical Hecke algebra 
	\[
		H_\psi^+ = \HH{\Spt(W)}{\It}{\tau_0}.
	\]

\begin{Thm}\label{T:isomorphism1}
The Hecke algebra $H_\psi^+$ is generated by
invertible elements $T_0, T_1, \dots, T_n$, satisfying the quadratic relations
	\[	(T_0 + 1)(T_0 - 1) = 0
			\quad \text{ and } \quad
		(T_i + 1)(T_i - q) = 0
			\quad \text{ for } i \neq 0,	\] 
and the braid relations of affine diagram of type C$_n$.
\begin{center}
\begin{tikzpicture}[scale=.45]
    \draw (0,-.1) --(2,-.1);
    \draw (0,.1) --(2,.1);
    \draw (2,0) --(5,0);
    \draw (7,0) --(8,0);
    \draw (8,-.1) --(10,-.1);
    \draw (8,.1) --(10,.1);
    \draw[fill] (0,0) circle(5pt);
    \draw[fill] (2,0) circle(5pt);
    \draw[fill] (4,0) circle(5pt);
    \draw[fill] (8,0) circle(5pt);
    \draw[fill] (10,0) circle(5pt);
    \node[below] at (0,-.1) {$T_0$};
    \node[below] at (2,-.1) {$T_1$};
    \node[below] at (4,-.1) {$T_2$}; 
    \node[below] at (10,-.1) {$T_n$};
    \node at (6,0) {$\cdots$};
\end{tikzpicture}
\end{center}
In particular, $H_\psi^+$ is abstractly isomorphic to $H^+$.  

Furthermore, this isomorphism is an isomorphism of Hilbert algebras;
and, if the Haar measures on $\Spt(W)$ and $\SO(V^+)$ are respectively normalized by 
	\[
		\vol(\It) = \dim(\tau_0) = q^{en} = |2|^{-n}
			\quad \text{ and } \quad
		\vol(I^+) = 1,
	\]
then the Plancherel measures on $H_\psi^+$ and $H^+$ coincide.
\end{Thm}

\begin{proof}
We prove this theorem by investigating the structure of some 2-dimensional Hecke subalgebras.  
For $0 \le i \le n$, we define
\[
	\It_i=\It\cup \It s_i \It = \bigcap_{j \ne i} \Kt_j;
\]
In the apartment, $\It_i$ corresponds to the wall which separates the
fundamental chamber $I$ from the chamber $s_i I s_i^{-1}$ or,
equivalently, to the wall whose vertices are $K_j$ with $j \ne i$. The
rank 2 picture is as follows.

\begin{center}
\begin{tikzpicture}[scale=3.5]
	\draw [gray] (.5,-.75) -- (.5,.75);
	\draw [gray] (1,-.75) -- (1,.75);
	\draw [gray] (-.25,.5) -- (1.25,.5);
	\draw [gray] (-.25,-.5) -- (1.25,-.5);
	\draw [gray] (.25,.75) -- (1.25,-.25);
	\draw [gray] (.25,-.75) -- (1.25,.25);
	\draw [gray] (-.25,0) -- (1.25,0);
	\draw [gray] (0,-.75) -- (0,.75);
	\draw [gray] (-.25,-.25) -- (.75,.75);
	\draw [gray] (-.25,.25) -- (.75,-.75);
	\draw [thick] (.5,0) -- (.5,-.5) -- (0,0) -- (0,.5) -- (.5,.5) -- (1,0) -- (.5,0);
	\draw [fill] (0,0) circle(.02);
	\draw [fill] (.5,0) circle(.02);
	\draw [fill] (.5,.5) circle(.02);
	\draw [ultra thick] (0,0) -- (.5,0) -- (.5,.5) -- (0,0);
	\node at (-.25,-.35) {$s_2 I s_2^{-1}$};
		\draw [->] (-.1,-.35) arc (-90:-45:.6);
	\node at (-.25,.35) {$s_1 I s_1^{-1}$};
		\draw [->] (-.1,.35) -- (.15,.35);
	\node at (1.25,.35) {$s_0 I s_0^{-1}$};
		\draw [->] (1.1,.35) arc (90:135:.6);
	\node at (.35,.15) {$I$};
	\node at (-.13,.06) {$K_0$};
	\node [below right] at (.5,0) {$K_1$};
	\node at (.65,.56) {$K_2$};
\end{tikzpicture}
\end{center}

We take $H_{\psi,i}^+$ to be the subalgebra consisting of elements supported 
on $\It_i$; that is,
	\[	H_{\psi,i}^+ = \HH{\It_i}{\It}{\tau_0}.		\]
This subalgebra is at most 2-dimensional and is isomorphic to 
$\End_{\It_i} \! \big( \Ind_{\It}^{\It_i}(\tau_0^*) \big)$; it is exactly 2-dimensional 
if and only if the induced representation is reducible. 

We define $\tau_{0,i}$ to be the subspace of $S(Y)$ generated by the action of $\It_i$ on $\tau_0$;
by Lemma \ref{L:inflate},
\[
	\tau_{0,i}
	=	\begin{cases}
			\tau_0		&	\text{ if } i \ne 0	\\
			\tau_1^+	&	\text{ if } i = 0.
		\end{cases}
\]
Working 
in the dual setting, Frobenius reciprocity guarantees that $\tau_{0,i}$ may be realized as 
a submodule of $\Ind_{\It}^{\It_i}(\tau_0)$, so it suffices to verify that it is a submodule of 
strictly smaller dimension.  
We note that
\[	
	d 	=	\dim \big( \Ind_{\It}^{\It_i}(\tau_0) \big) 
		=	\dim (\tau_0) \cdot [\It_i:\It] = q^{en}(q+1)
\]
and
\[
	d_1 
	=	\dim (\tau_{0,i}) 
	=	\begin{cases}
			q^{en} 							& 	\text{if } i \neq 0,	\\
			\tfrac{1}{2} q^{en}(q + 1) 	& 	\text{if } i = 0,
		\end{cases}		
\]
hence $\Ind_{\It}^{\It_i}(\tau_0^*)$ is indeed reducible.

Since $H_{\psi,i}^+$ is 2-dimensional, it contains an element $T_i$ which is 
supported precisely on $\It s_i \It$.  In order to normalize $T_i$ and to compute its 
quadratic relation, we consider the decomposition
	\[	\Ind_{\It}^{\It_i} (\tau_0^\ast) = \pi_1^* \oplus \pi_2^*,		\]
where $\pi_1^* = \tau_{0,i}^*$ has dimension $d_1$ and $\pi_2^*$ has dimension
	\[	d_2 = d - d_1
			= 	\begin{cases}
					q^{en+1} & \text{if } i \neq 0	,  	\\
					\tfrac{1}{2} q^{en}(q+1) & \text{if } i = 0.
				\end{cases}			\] 
We normalize $T_i$ to act by $\lambda_2 = -1$ on $\pi_2^*$ and by $\lambda_1$ on $\pi_1^*$.  
Using Lemma \ref{L:Hecke}, we have
	\[	\lambda_1 = \frac{d_2}{d_1} 
				=
				\begin{cases}
						q & \text{if } i \neq 0,	\\
						1 & \text{if } i = 0,
				\end{cases}			\]
giving the desired quadratic relation $(T_i + 1)(T_i - \lambda_1) = 0$.  The invertibility 
of $T_i$ follows from its quadratic relation; explicitly, 
	\[	T_0^{-1} = T_0
			\quad \text{ and } \quad
		T_i^{-1}=q^{-1}(T_i-q+1)
			\quad \text{ for } i \neq 0.		\] 
			
Suppose that we have a braid relation
	\[
		s_i s_j \cdots \; = s_j s_i \cdots
	\]
in $\Omega_a$.  Then each of the Hecke algebra elements $T_i T_j \dots$ and $T_j T_i \dots$ is supported on the same $\It$-double coset.  From the normalization of the $T_i$, each of these elements must act on 
$(\tau_0 \otimes \tau_0^*)^{\It}$ in the same way. Whence
	\[
		T_i T_j \cdots \; = T_j T_i \cdots.
	\]  
Therefore, the braid relations for the $T_i$ are the same as those for the $s_i$, so any minimal expression $w = s_{i_1} \cdots s_{i_r}$ defines a Hecke algebra
element $T_w = T_{i_1} \cdots T_{i_r}$ supported on $\It w \It$.
From the quadratic and braid relations, we have an explicit isomorphism $H_\psi^+ \to H^+$ given by
	\[
		T_0 \mapsto \sigma
			\quad \text{ and } \quad
		T_i \mapsto U_i
			\quad \text{ for }
		i \neq 0.
	\]

We now show that $H_\psi^+ \cong H^+$ is an isomorphism of Hilbert
algebras. As each Hecke algebra is supported on its respective affine Weyl group, we have that
	\[	\operatorname{tr}(T_w)
			= 	\begin{cases}
					1 & \text{if } w = 1,		\\
					0 & \text{if } w \neq 1
				\end{cases}
			\quad \text{ and } \quad
		\operatorname{tr}(U_w)
			= 	\begin{cases}
					1 & \text{if } w = 1,		\\
					0 & \text{if } w \neq 1,
				\end{cases}			\]
so the trace-zero property is clearly preserved. For $w \in
\Omega_a^+$, the $I^+$-double cosets of $w$ and $w^{-1}$ are equal, so
the  $*$-operation in $H^+$ satisfies $U_i^* = U_i$, and hence, $U_w^* = U_{w^{-1}}$.  In 
$H_\psi^+$, we have that $T_i^*$ and $T_i$ are both supported on $\It s_i \It$, so 
$T_i^*$ acts on $\tau_0^*$ by a constant.  For $\phi \in \tau_0^*$, 
$[ \phi, T_i^* \phi ] = [ T_i \phi, \phi ]$, so $T_i^*$ and $T_i$ must act by the same constant.  
Thus, $T_i^* = T_i$ and $T_w^* = T_{w^{-1}}$, so the $*$-operation 
is also preserved.  

From the normalization given in the statement of the theorem, the
preservation of the Plancherel measures follows immediately from Lemma
\ref{L:Transfer}.
\end{proof}

\begin{Cor}
\label{C:Plancherel1}
The isomorphism
$H_\psi^+ \cong H^+$ preserves the formal degree of the Steinberg representations of the respective
Hecke algebras. 
\end{Cor}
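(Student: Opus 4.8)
The plan is to deduce Corollary \ref{C:Plancherel1} as a direct consequence of Theorem \ref{T:isomorphism1}, since the theorem already establishes that $H_\psi^+ \cong H^+$ is an isomorphism of Hilbert algebras preserving Plancherel measures under the stated normalization. The key observation is that the formal degree of a discrete-series representation of a Hilbert algebra is precisely the mass that the Plancherel measure assigns to the corresponding atom in the spectrum. Since an isomorphism of Hilbert algebras carries the $*$-operation and trace to their counterparts, it intertwines the respective Plancherel measures (up to the normalizing constants recorded in Lemma \ref{L:Transfer}), and under the normalization $\vol(\It) = \dim(\tau_0)$ and $\vol(I^+) = 1$ these constants are arranged so that the measures coincide. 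Thus the formal degree of any discrete-series module is preserved, and it suffices to check that the Steinberg representation corresponds to the Steinberg representation under $\widehat\alpha$.

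First I would recall that the Steinberg representation of an affine Hecke algebra is the one-dimensional module on which each lattice generator $U_i$ (for $i \neq 0$) acts by $-1$, equivalently the sign character restricted to the finite part, and that it is square-integrable. Under the explicit isomorphism $T_0 \mapsto \sigma$, $T_i \mapsto U_i$ constructed in the proof of the theorem, the Steinberg character of $H_\psi^+$ (sending $T_i \mapsto -1$) is carried to the Steinberg character of $H^+$. Because this isomorphism preserves the quadratic relations $(T_i+1)(T_i - q) = 0$ and the braid structure, it respects the classification of the one-dimensional modules, and in particular the homeomorphism $\widehat\alpha$ of Lemma \ref{L:Transfer} matches the Steinberg point on the $H^+$ side with the Steinberg point on the $H_\psi^+$ side.

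Next I would invoke Lemma \ref{L:Transfer} directly with $S$ taken to be the singleton $\{\mathrm{St}\}$ in ${_r}\widehat{G}_2(\sigma_2)$. The lemma gives
	\[
		\frac{\mu_1(\It)}{\dim \tau_0} \, \widehat\mu_1(\widehat\alpha(\mathrm{St}))
			= \frac{\mu_2(I^+)}{\dim \mathbf{1}} \, \widehat\mu_2(\mathrm{St}).
	\]
With the normalization $\mu_1(\It) = \dim(\tau_0)$ and $\mu_2(I^+) = 1 = \dim \mathbf{1}$, both prefactors equal $1$, so $\widehat\mu_1(\widehat\alpha(\mathrm{St})) = \widehat\mu_2(\mathrm{St})$; that is, the Plancherel mass of the Steinberg atom is the same on both sides. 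Since the formal degree of a square-integrable representation is exactly this Plancherel mass, the formal degrees agree.

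The main obstacle I anticipate is not the measure bookkeeping, which is immediate from the theorem, but rather pinning down that the Steinberg module is genuinely a discrete-series (square-integrable) point of the spectrum lying in the support ${_r}\widehat{G}$ of the Plancherel measure, and that it matches correctly under $\widehat\alpha$. For this I would rely on the standard fact that for an affine Hecke algebra with equal or unequal parameters the Steinberg module is the unique module on which the lattice generators act by $-1$ and that it is square-integrable, together with the observation that $\widehat\alpha$ is defined by pulling back modules along $\alpha$, so it automatically sends the $H^+$-Steinberg module to the $H_\psi^+$-module defined by the same system of eigenvalues. Once this identification is in place, the preservation of formal degree follows formally from the coincidence of Plancherel measures established in Theorem \ref{T:isomorphism1}.
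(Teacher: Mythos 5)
Your proposal is correct and follows the same route the paper intends: the corollary is stated without proof precisely because, as the subsequent Remark explains, it follows immediately from the coincidence of induced Plancherel measures in Theorem \ref{T:isomorphism1} via Lemma \ref{L:Transfer}, the formal degree being the Plancherel mass of the Steinberg atom. Your additional care in checking that $\widehat\alpha$ matches the Steinberg points is a reasonable (and correct) elaboration of what the paper leaves implicit.
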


\begin{Rmk}
If $p\neq 2$, then
  the proof for the  isomorphism $H_\psi^+ \cong H^+$ is essentially the one given in \cite{GS2}.  One notable difference is
  in the specific normalization of Hecke operators, which is always a delicate issue.
  In \cite{GS2}, they work in the central extension $\Spt(W)_8$ of $\Sp(W)$ by the 8th roots 
  of unity and normalize the generating Hecke operators to act on certain lifts of affine 
  reflections in a specified way.  We have opted to normalize the generating Hecke operator $T_i$ to 
  act by $-1$ on the irreducible representation not containing the minimal type $\tau_0^*$.
\end{Rmk}

\begin{Rmk}
The Steinberg representation of a Hecke algebra is defined by having each of the generating Hecke 
operators act by $-1$.  In \cite{GS2}, they show directly that the formal degrees 
of the respective Steinberg representations coincide.  This computation is avoided here 
because it follows from the more general coincidence of the induced Plancherel measures.  Indeed,
the implementation of the theory of induced Plancherel measures is the other notable difference
between this proof and that of \cite{GS2}.
\end{Rmk}

\begin{Rmk}
Assuming $k = \Q_2$, an isomorphic Hecke algebra is constructed in \cite{Wo}
  by finding a 1-dimensional type for a subgroup of $\It$.  This construction 
  extends to the case where $k$ is an unramified extension of $\Q_2$ but does 
  not appear to work for ramified extensions.
\end{Rmk}

%%%%%%%%%%%%%%%%%%%%%%%%%%%%%%%%%%%%%%%%%%%%%%%%%%%%%%%%%%%%%%%%%%%

\subsection{Hecke algebra of $\SO(V^-)$}

%%%%%%%%%%%%%%%%%%%%%%%%%%%%%%%%%%%%%%%%%%%%%%%%%%%%%%%%%%%%%%%%%%%

For the remainder of the section, we suppose that $n \geq 2$.  Let $V^-$ be a quadratic space 
of dimension $2n+1$ with trivial discriminant and non-trivial Hasse invariant; then $\SO(V^-)$
is the non-split inner form of $\SO(V^+)$.  Let $I^-$ be the Iwahori 
subgroup of $\SO(V^-)$, which is the pointwise stabilizer of a fundamental chamber in its 
Bruhat-Tits building, and $\Omega_a^-$ its affine Weyl group, which is generated by reflections 
$s^-_1, \dots, s^-_n$ subject to the braid relations of the affine diagram of type C$_{n-1}$.  
\begin{center}
\begin{tikzpicture}[scale=.45]
    \draw (0,-.1) --(2,-.1);
    \draw (0,.1) --(2,.1);
    \draw (2,0) --(5,0);
    \draw (7,0) --(8,0);
    \draw (8,-.1) --(10,-.1);
    \draw (8,.1) --(10,.1);
    \draw[fill] (0,0) circle(5pt);
    \draw[fill] (2,0) circle(5pt);
    \draw[fill] (4,0) circle(5pt);
    \draw[fill] (8,0) circle(5pt);
    \draw[fill] (10,0) circle(5pt);
    \node[below] at (0,-.1) {$s^-_1$};
    \node[below] at (2,-.1) {$s^-_2$};
    \node[below] at (4,-.1) {$s^-_3$}; 
    \node[below] at (10,-.1) {$s^-_n$};
    \node at (6,0) {$\cdots$};
\end{tikzpicture}
\end{center}
The standard Iwahori-Hecke algebra is the set of smooth, compactly-supported $I^-$\!-bi-invariant
functions on $\SO(V^-)$,
	\[
		H^- = \HH{\SO(V^-)}{I^-}{\mathbf{1}}.
	\]

For each $w \in \Omega^-_a$, let $U_w$
be the characteristic function on the double coset $I^- w I^-$.  The collection $\{ U_w \}$ forms a basis
of $H^-$ as a vector space.  As an algebra, $H^-$ is generated by $U_1, \dots, U_n$, where 
$U_i = U_{s^-_i}$.  These generators satisfy the quadratic relations,
	\[	(U_1 + 1)(U_1 - q^2) = 0
			\quad \text{ and } \quad
		(U_i + 1)(U_i - q) = 0
			\quad \text{ for } i \neq 1,	\] 
and the same braid relations as the $s^-_i$.  See \cite{GS2} or \cite{Ti} for details.

%%%%%%%%%%%%%%%%%%%%%%%%%%%%%%%%%%%%%%%%%%%%%%%%%%%%%%%%%%%%%%%%%%%

\subsection{$\tau_1^-$-spherical Hecke algebra of $\Spt(W)$}

%%%%%%%%%%%%%%%%%%%%%%%%%%%%%%%%%%%%%%%%%%%%%%%%%%%%%%%%%%%%%%%%%%%

We define the open compact subgroup $\Jt \subseteq \Spt(W)$ to be the full inverse image of
	\[	J = \bigcap_{j \ne 0} K_j = I \cup I s_0 I,		\]
and consider the restriction of $\tau_1^-$ to $\Jt$.  The group $\Jt$ contains the metaplectic
preimage of the subgroup
	\[	
		\Phi_{-2 \epsilon_1 + 1} \big( \SL_2(\Ok) \big) \times I_{n-1},
	\]
where $I_{n-1}$ is an Iwahori subgroup of the symplectic group of type C$_{n-1}$.  
From \cite{SW}, each component of this
direct product acts irreducibly on the corresponding component of the tensor product
	\[
		\tau_1^- = S(\Ok / 2 \varpi \Ok)^- \otimes S( \Ok^{n-1} / 2 \Ok^{n-1} ),
	\]
hence the restriction of $\tau_1^-$ to $\Jt$ must remain irreducible.
In this section, we compute the $\tau_1^-$-spherical Hecke algebra
	\[
		H_\psi^- = \HH{\Spt(W)}{\Jt}{\tau_1^-}.
	\]

We define $\Omega'_a = \la s'_1,\dots, s'_n \ra \subseteq \Omega_a$, where
\[
s'_i = \begin{cases} s_i & \text{if } i \neq 1 \\ s_1 s_0 s_1 & \text{if } i = 1. \end{cases}
\]
The reflection $s'_1$ corresponds to the affine reflection $s_{-2\epsilon_2 + 1}$, hence
$\Omega'_a$ is isomorphic to the affine Weyl group of type C$_{n-1}$, i.e., to $\Omega_a^-$;
explicitly, $\Omega'_a$ acts as the affine Weyl group of type C$_{n-1}$ on the hyperplane
$P_0$.

The proof of the following lemma is a slight variation on that of \cite[Lemma 10]{GS2}.

\begin{Lem}\label{L:support}
The support of $H_\psi^-$ is contained in $\Jt \Omega'_a \Jt$.
\end{Lem}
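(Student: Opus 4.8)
The plan is to read off the support one $\Jt$-double coset at a time and reduce everything to a single intertwining computation for the minimal type. Since $J = I \cup I w_0 I$ with $w_0$ a representative of $s_0$, the group $\Jt$ is generated by $\It$ together with the lift $\wt_0$ of $s_0$; hence every $\Jt$-double coset is a union of Iwahori double cosets and has the form $\Jt \wt \Jt$ for some $w \in \Omega_a$, two such agreeing exactly when the corresponding elements lie in one $(\la s_0\ra, \la s_0\ra)$ double coset of $\Omega_a$. A function in $H_\psi^-$ supported on $\Jt\wt\Jt$ is determined by its value at $\wt$, and the bi-equivariance of Section \ref{S:Hecke} forces that value to lie in $\Hom_{\Jt \cap \wt\Jt\wt^{-1}}\big( {}^{\wt}(\tau_1^-)^*, (\tau_1^-)^* \big)$. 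So $\Jt\wt\Jt$ meets the support precisely when $\tau_1^-$ and its $\wt$-conjugate admit a nonzero intertwiner over the overlap $\Jt \cap \wt\Jt\wt^{-1}$; since $\wt_0 \in \Jt$, it suffices to prove that this forces $w \in \la s_0\ra \cdot \Omega'_a$, whence $\Jt\wt\Jt \subseteq \Jt\Omega'_a\Jt$.

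First I would isolate what such an intertwiner can see. By \cite{SW}, the factorization $\tau_1^- = S(\Ok/2\varpi\Ok)^- \otimes S(\Ok^{n-1}/2\Ok^{n-1})$ distinguishes the coordinate $\f_1$ --- it alone carries the finer lattice $2\varpi\Ok$ and the odd part --- from $\f_2,\dots,\f_n$, which carry the full Schwartz space at level $2\Ok$. The second tensor factor is exactly the $\tau_0$-type datum of the symplectic group $\Sp(W')$ on $\Span\{\e_2,\f_2,\dots,\e_n,\f_n\}$, whose spherical Hecke algebra (of type C$_{n-1}$, by the argument of Theorem \ref{T:isomorphism1} applied to $\Sp(W')$) has support $\Omega'_a$; the odd first factor is an irreducible module for the copy of $\SL_2(\Ok)$ through which $\wt_0$ acts and is therefore absorbed into $\Jt$. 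Consequently $s_0$, $s'_1 = s_1 s_0 s_1 = s_{-2\epsilon_2+1}$, and $s_2,\dots,s_n$ all respect the factorization, while $s_1 = s_{\epsilon_1 - \epsilon_2}$ interchanges the distinguished $\f_1$-slot with the $\f_2$-slot; because the odd module $S(\Ok/2\varpi\Ok)^-$ and the full module $S(\Ok/2\Ok)$ are inequivalent for the pertinent level subgroups, the intertwiner attached to $\wt_1$ vanishes and $\Jt s_1 \Jt$ is not in the support.

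For a general $w$ I would argue directly that a nonzero intertwiner over $\Jt \cap \wt\Jt\wt^{-1}$ forces $w$ to carry the distinguished coordinate $\f_1$ to itself up to sign and up to the level-$2\varpi$ symmetries already contained in $\Jt$: if $w$ mixed $\pm\epsilon_1$ with some $\pm\epsilon_j$, $j \geq 2$, then ${}^{\wt}\tau_1^-$ would place the odd, level-$2\varpi$ decoration in a slot where $\tau_1^-$ is full at level $2$, so the two conjugates disagree on an element of the overlap, exactly as in the $s_1$ computation above. The elements of $\Omega_a$ fixing $\f_1$ in this sense while acting as the affine Weyl group on the remaining coordinates are precisely $\la s_0\ra \cdot \Omega'_a$; here $s_0$ is central, since it commutes with $s_2,\dots,s_n$ and $s_0 s'_1 s_0 = s_0 s_1 s_0 s_1 s_0 = s_1 s_0 s_1 = s'_1$, so that $\Jt \la s_0\ra \Omega'_a \Jt = \Jt \Omega'_a \Jt$. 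Therefore every $w$ contributing to the support lies in $\la s_0\ra \cdot \Omega'_a$, and $\supp(H_\psi^-) \subseteq \Jt\Omega'_a\Jt$.

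The main obstacle is the intertwining dichotomy in residual characteristic $2$: rigorously showing that the $\f_1$- and $\f_2$-slots are inequivalent --- so that $\wt_1$ carries no intertwiner --- rests on the fine description in \cite{SW} of how the level subgroups act on $S(\Ok/2\varpi\Ok)^-$ versus $S(\Ok/2\Ok)$, which replaces the odd residual characteristic computation of \cite{GS2}, and it must be carried out while tracking the metaplectic $2$-cocycle on the relevant $\SL_2(\Ok)$-subgroups. Once this single vanishing statement is in place, the passage to a general $w$ is the combinatorial bookkeeping indicated above.
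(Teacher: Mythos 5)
Your framework is the right one: the support of $\HH{\Spt(W)}{\Jt}{\tau_1^-}$ is governed by the Mackey condition $\Hom_{\Jt\cap \wt\Jt\wt^{-1}}\big({}^{\wt}(\tau_1^-)^*,(\tau_1^-)^*\big)\neq 0$, and the identification of $\Jt$-double cosets with $\la s_0\ra\backslash\Omega_a/\la s_0\ra$ is correct. The paper's proof is in fact a particular instance of this criterion, executed with a single test element: for $f$ supported on $\Jt w\Jt$, it conjugates $\xt_{\alpha_0}(4)$ across $f(w)$ to get $\xt_{w(\alpha_0)}(t)$ with $t\in 4\Ok$, and uses the explicit computation (from the formula $\xt_{2\epsilon_j}(u)\phi(y)=\psi(uy_j^2)\phi(y)$ and the conductor of $\psi$) that $\xt_\alpha(u)\in\ker\tau_1^-$ iff $u\in 4\Ok$ for $\alpha\neq -2\epsilon_1$ and $u\in 4\varpi^2\Ok$ for $\alpha=-2\epsilon_1$. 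Since $w(\alpha_0)=\sigma^{-1}(-2\epsilon_1)+(1-2a_1)$ for $w=a\sigma$, one element acts trivially and the other by a nontrivial scalar unless $w(\alpha_0)=\pm\alpha_0$, which forces $w\in\la s_0\ra\Omega'_a$.

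Measured against this, your sketch has two genuine gaps. First, your exclusion mechanism --- ``if $w$ mixed $\pm\epsilon_1$ with some $\pm\epsilon_j$, $j\geq 2$, the slots disagree'' --- only constrains the finite Weyl group component $\sigma$ of $w=a\sigma$. It does not exclude translations: for example $w=t_{(2,0,\dots,0)}$ mixes no slots and ``carries $\f_1$ to itself up to sign,'' yet $w(\alpha_0)=-2\epsilon_1-3\neq\pm\alpha_0$, so $w\notin\la s_0\ra\Omega'_a$ and the lemma requires $f(w)=0$ there. Your phrase ``up to the level-$2\varpi$ symmetries already contained in $\Jt$'' gestures at the right constraint, but no argument is given for it, and your closing claim that the elements ``fixing $\f_1$ in this sense'' are precisely $\la s_0\ra\cdot\Omega'_a$ is false if ``this sense'' means only the slot condition, since all translations in the first coordinate satisfy it. The affine part $1-2a_1$ of $w(\alpha_0)$ is exactly what the paper's test element detects and your argument does not. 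Second, the inequivalence of the $\f_1$- and $\f_2$-slots over the relevant overlap groups --- which you correctly identify as the crux for ruling out $\Jt s_1\Jt$ --- is asserted rather than proved; you flag it yourself as ``the main obstacle.'' The paper needs no such module-theoretic dichotomy: the scalar computation with $\psi(uy_j^2)$ settles every case, permutation and translation alike, in one stroke. To repair your proof you would need to (i) prove the slot inequivalence over $\Jt\cap\wt_1\Jt\wt_1^{-1}$, and (ii) add a separate argument excluding nontrivial translations in the first coordinate, at which point you would essentially have reconstructed the paper's affine-root computation.
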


\begin{proof}
Fix $f \in H_\psi^-$ and $\sigma \in \Omega_a$.  Write $\sigma = a s$, where $s \in \Omega$ 
and $a$ is translation by $(a_1, \dots, a_n)$.  As $J = I \cup I s_0 I$, we have that $\sigma$ 
and $s_0 \sigma$ represent the same $\Jt$-double coset, so it suffices to show that $f(\sigma) \neq 0$ 
implies that either $\sigma$ or $s_0 \sigma$ is in $\Omega'_a$.

The element $\sigma$ conjugates the root group of $\alpha_0$ to the root group of $\beta + m$, where $\beta = s^{-1}(-2\epsilon_1)$ is a long root and $m = 1 - 2a_1$; in particular, $\sigma^{-1} \xt_{\alpha_0}(4) \sigma = \xt_{\beta+m}(t)$ for some $t \in 4 \Ok$.

From the description of the Weil representation in Section \ref{S:Weil}, we derive the following criteria for long roots $\alpha$:
	\[	\xt_\alpha(u) \in \ker \tau_1^- 
			\quad \text{ if and only if } \quad
		\begin{cases}
			u \in 4 \Ok	& \text{if } \alpha \neq -2\epsilon_1,		\\
			u \in 4 \varpi^2 \Ok & \text{if } \alpha = -2 \epsilon_1.
		\end{cases}				\]
Therefore, $\sigma$ or $s_0 \sigma$ is in $\Omega'_a$ if and only if $\beta + m \neq \pm \alpha_0$.

Let $t \in 4\Ok$ be such that
	\[	\sigma^{-1} \xt_{\alpha_0}(4) \sigma = \xt_{\beta+m}(t).		\]  
First suppose that $m > -1$ and $\beta \neq -2\epsilon_1$ or that $m > 1$ and $\beta = -2 \epsilon_1$.  Then
$\xt_{\beta+m}(t) \in \ker \tau_1^-$ and $\xt_{\alpha_0}(4) \notin \ker \tau_1^-$, 
hence
	\[	f(\sigma) 	
				= f(\sigma) (\tau_1^-)^* \big( \xt_{\beta+m}(t) \big)
				= f\big( \sigma \xt_{\beta+m}(t) \big)
				= f\big( \xt_{\alpha_0}(4) \sigma \big)
				= (\tau_1^-)^* \big( \xt_{\alpha_0}(4) \big) f(\sigma),			\]
giving that $f(\sigma) = 0$.  

Now suppose that $m < 1$ and $\beta \neq 2\epsilon_1$ or that $m < -1$ and $\beta = 2\epsilon_1$.  
Then $\xt_{-\beta-m}(t) \in \ker \tau_1^-$ 
and $\xt_{-\alpha_0}(4) \notin \ker \tau_1^-$, hence 
	\[	f(\sigma) 	
				= f(\sigma) (\tau_1^-)^* \big( \xt_{-\beta-m}(t) \big)
				= f\big( \sigma \xt_{-\beta-m}(t) \big)
				= f\big( \xt_{-\alpha_0}(4) \sigma \big)
				= (\tau_1^-)^* \big( \xt_{-\alpha_0}(4) \big) f(\sigma),			\]
giving that $f(\sigma) = 0$.  

In sum, if $f(w) \neq 0$, then $\beta+m$ must equal $\pm \alpha_0$, and the lemma is proved.
\end{proof}

\begin{Thm}\label{T:isomorphism2}
The Hecke algebra $H_\psi^-$ is generated by
invertible elements $T_1, \dots, T_n$, satisfying the quadratic relations
	\[	(T_1 + 1)(T_1 - q^2) = 0
			\quad \text{ and } \quad
		(T_n + 1)(T_n - q) = 0
			\quad \text{ for } i \neq 0,	\] 
and the braid relations of the affine diagram of type C$_{n-1}$.
\begin{center}
\begin{tikzpicture}[scale=.45]
    \draw (0,-.1) --(2,-.1);
    \draw (0,.1) --(2,.1);
    \draw (2,0) --(5,0);
    \draw (7,0) --(8,0);
    \draw (8,-.1) --(10,-.1);
    \draw (8,.1) --(10,.1);
    \draw[fill] (0,0) circle(5pt);
    \draw[fill] (2,0) circle(5pt);
    \draw[fill] (4,0) circle(5pt);
    \draw[fill] (8,0) circle(5pt);
    \draw[fill] (10,0) circle(5pt);
    \node[below] at (0,-.1) {$T_1$};
    \node[below] at (2,-.1) {$T_2$};
    \node[below] at (4,-.1) {$T_3$}; 
    \node[below] at (10,-.1) {$T_n$};
    \node at (6,0) {$\cdots$};
\end{tikzpicture}
\end{center}
In particular, $H_\psi^-$ is abstractly isomorphic to $H^-$.

Furthermore, this isomorphism is an isomorphism of Hilbert algebras
and, if the Haar measures on $\Spt(W)$ and $\SO(V^-)$ are respectively normalized by 
	\[
		\vol(\Jt) = \dim \tau_1^- = \tfrac{1}{2} q^{en} (q - 1)
			\quad \text{ and } \quad
		\vol(I^-) = 1,
	\]
then the Plancherel measures on $H_\psi^-$ and $H^-$ coincide.
\end{Thm}

\begin{proof}
This proof is similar to the proof of Theorem \ref{T:isomorphism1}.  We investigate the structure of 
some 2-dimensional Hecke subalgebras in order to see that $H_\psi^-$ is supported 
exactly on $\Jt \Omega'_a \Jt$.  For $1 \le i \le n$, we define $\Jt_i$ to be the 
group generated by $\Jt$ and $\Jt s'_i \Jt$; in particular, $\Jt_i$ is the full inverse image of
\[
	J_i = \bigcap_{j \ne 0,i} K_j.
\]
The group $J$ corresponds to the facet of the fundamental chamber with vertices $K_1, \dots, K_n$, i.e., the facet that lies in the hyperplane $P_0$.  The conjugate $s'_i J (s'_i)^{-1}$ corresponds to a facet in the same hyperplane.  The figure on the left depicts the apartment in rank 2;  the figure on the right depicts the hyperplane $P_0$ in the rank 3 case.

\begin{center}
\begin{tikzpicture}[scale = 3.8]
	\draw [gray] (.5,-.5) -- (.5,1);
	\draw [gray] (1,-.5) -- (1,1);
	\draw [gray] (-.25,.5) -- (1.25,.5);
	\draw [gray] (-.25,-.5) -- (1.25,-.5);
	\draw [gray] (0,1) -- (1.25,-.25);
	\draw [gray] (.5,-.5) -- (1.25,.25);
	\draw [gray] (-.25,0) -- (1.25,0);
	\draw [gray] (0,-.5) -- (0,1);
	\draw [gray] (-.25,-.25) -- (1,1);
	\draw [gray] (-.25,.25) -- (.5,-.5);
	\draw [gray] (-.25,1) -- (1.25,1);
	\draw [gray] (-.25,.75) -- (0,1);
	\draw [gray] (1,1) -- (1.25,.75);
	\draw [thick] (.5,-.5) -- (.5,1);
	\draw [ultra thick] (.5,0) -- (.5,.5);
	\draw [fill] (0,0) circle(.02);
	\draw [fill] (.5,0) circle(.02);
	\draw [fill] (.5,.5) circle(.02);
	\node at (-.13,.06) {$K_0$};
	\node [below right] at (.5,0) {$K_1$};
	\node at (.65,.56) {$K_2$};
	\node at (.45,.25) {$J$};
	\node at (.2,-.44) {$s_2 J s_2^{-1}$};
		\draw[->] (.2,-.38) arc (145:90:.35);
	\node at (.2,.56) {$s'_1 J (s'_1)^{-1}$};
		\draw[->] (.2,.62) arc (145:90:.35);
\end{tikzpicture}
\quad \quad 
\begin{tikzpicture}[scale=3.8]
	\node [below right] at (-.25,.75) {$P_0$};
	\draw [gray] (.5,-.75) -- (.5,.75);
	\draw [gray] (1,-.75) -- (1,.75);
	\draw [gray] (-.25,.5) -- (1.25,.5);
	\draw [gray] (-.25,-.5) -- (1.25,-.5);
	\draw [gray] (.25,.75) -- (1.25,-.25);
	\draw [gray] (.25,-.75) -- (1.25,.25);
	\draw [gray] (-.25,0) -- (1.25,0);
	\draw [gray] (0,-.75) -- (0,.75);
	\draw [gray] (-.25,-.25) -- (.75,.75);
	\draw [gray] (-.25,.25) -- (.75,-.75);
	\draw [thick] (.5,0) -- (.5,-.5) -- (0,0) -- (0,.5) -- (.5,.5) -- (1,0) -- (.5,0);
	\draw [fill] (0,0) circle(.02);
	\draw [fill] (.5,0) circle(.02);
	\draw [fill] (.5,.5) circle(.02);
	\draw [ultra thick] (0,0) -- (.5,0) -- (.5,.5) -- (0,0);
	\node at (-.25,-.35) {$s_3 J s_3^{-1}$};
		\draw [->] (-.1,-.35) arc (-90:-45:.6);
	\node at (-.25,.35) {$s_2 J s_2^{-1}$};
		\draw [->] (-.1,.35) -- (.15,.35);
	\node at (1.34,.35) {$s'_1 J (s'_1)^{-1}$};
		\draw [->] (1.1,.35) arc (90:135:.6);
	\node at (.35,.15) {$J$};
	\node at (-.13,.06) {$K_1$};
	\node [below right] at (.5,0) {$K_2$};
	\node at (.65,.56) {$K_3$};
\end{tikzpicture}
\end{center}

If $i \neq 1$, then $\Jt_i = \Jt \cup \Jt s_i \Jt$, hence $[\Jt_i : \Jt] = q + 1$; for the case $i = 1$, we note that $\Jt_1$ is the union of those $\It w \It$ for which $w$ is in the group generated by $s_0$ and $s_1$, hence
	\[	[ \Jt_1 : \Jt ] 
		= \frac{ [\Jt_1 : \It] }{ [\Jt : \It] }
		= \frac{ 1 + 2q + 2q^2 + 2q^3 + q^4 }{ 1 + q }
		= 1 + q + q^2 + q^3.		\]

We take $H_{\psi,i}^-$ to be the subalgebra of $H_\psi^-$ consisting of elements supported on $\Jt_i$; that is,
	\[	H_{\psi,i}^-=\HH{\Jt_i}{\Jt}{\tau_1^-}.		\]
This subalgebra is at most 2-dimensional and is isomorphic to $\End_{\Jt_i}\big( \Ind_{\Jt}^{\Jt_i} (\tau_1^-)^* \big)$. 

Let $\tau_{1,i}^-$ be the subspace of $S(Y)$ generated by the action
of $\Jt_i$ on $\tau_1^-$.  We use Lemma \ref{L:inflate} repeatedly to
compute $\tau_{1,i}$.  First, we note that $\Jt$ preserves $\tau_i^-$
for $1 \le i \le n$, so it suffices to consider the action of $s'_i$
on $\tau_1^-$.  Next, if $i \ge 2$, then $s'_i = s_i$ preserves
$\tau_1^-$, hence $\tau_{1,i}^- = \tau_1^-$.  

Lastly, we claim that $\tau_{1,1}^- = \tau_2^-$.  Since $s'_1 = s_1
s_0 s_1$, we need to consider the action of $s_1$ and $s_0$.  By the
same lemma, $s_1$ inflates $\tau_1^-$ to $\tau_2^-$.  Hence, it
remains to show that, if $\phi \in \tau_2^-$, then $s_0 \phi \in \tau_2^-$.
For such $\phi$, we recall from Section \ref{S:Weil} that the first component $\phi_1$ is in
$S(\Ok/2\varpi\Ok)^-$ and that $s_0$ acts on $\phi_1$ via
\[
	\big[ s_0 \phi_1 \big] (y_1)
	=	c \widehat\phi_1(\varpi^{-1} y_1).
\]
Since the Fourier transform maps $S(\Ok/2\varpi\Ok)^-$ to $S(\varpi^{-1}\Ok/2\Ok)^-$, the first component of $s_0\phi$ remains in $S(\Ok/2\varpi\Ok)^-$ and the claim is proved.

To see that $H_{\psi,i}^-$ is exactly 2-dimensional, we again work in the dual setting and note that
	\[ d_1 = \dim \tau_{1,i}^-
			=	\begin{cases}
					\dim (\tau_1^-) & \text{if } i \neq 1,	\phantom{\Big|}\\
					\dim (\tau_2^-) & \text{if } i = 1 \phantom{\Big|}
				\end{cases}
			=	\begin{cases}
					\tfrac{1}{2} q^{en} (q - 1) & \text{if } i \neq 1, \phantom{\Big|}	\\
					\tfrac{1}{2} q^{en} (q^2 - 1) & \text{if } i = 1 \phantom{\Big|}
				\end{cases}		\]
is strictly smaller than
	\[	d = \dim \big(\Ind_{\Jt}^{\Jt_i} (\tau_1^-) \big) 
			= \dim (\tau_1^-) \cdot [\Jt_i:\Jt]
			=	\begin{cases}
					\tfrac{1}{2} q^{en} (q^2 - 1) & \text{if } i \neq 1, \phantom{\Big|}		\\
					\tfrac{1}{2} q^{en} (q^4 - 1) & \text{if } i = 1.  \phantom{\Big|}
				\end{cases}			\]

Hence, for $1 \le i \le n$, there exists $T_i \in H_\psi^-$ supported precisely on $\Jt s_i \Jt$.  We consider the decomposition
	\[ \Ind_{\Jt}^{\Jt_i} (\tau_1^-)^* = \pi_1^* \oplus \pi_2^*,		\]
where $\pi_1^* = (\tau_{1,i}^-)^*$ has dimension $d_1$ and $\pi_2^*$ has dimension
	\[	d_2 = d - d_1 
			=	\begin{cases}
					\tfrac{1}{2}q^{en}(q^2 - q) & \text{if } i \neq 1, \phantom{\Big|}		\\
					\tfrac{1}{2}q^{en}(q^4 - q^2) & \text{if } i = 1. \phantom{\Big|}
				\end{cases}			\]
We normalize $T_i$ to act by $\lambda_2 = -1$ on $\pi_2^*$ and by $\lambda_1$ on $\pi_1^*$.  Using Lemma \ref{L:Hecke}, we have
	\[	\lambda_1 
			= \frac{d_2}{d_1}
			= \begin{cases}
					q & \text{if } i \neq 1,		\\
					q^2 & \text{if } i = 1,
				\end{cases}				\]
giving the desired quadratic relation $(T_i + 1)(T_i - \lambda_1) = 0$.  The invertibility of $T_i$ follows from its quadratic relation; explicitly, $T_i^{-1} = \lambda_1^{-1}(T - \lambda_1 + 1)$.

The proof of the braid relations mimics the proof of Theorem \ref{T:isomorphism1} 
with $\Jt$ instead of $\It$, $\tau_1^-$ instead of $\tau_0$, and $\Omega'_a$ instead 
of $\Omega_a$.  We note that computations involving $\Jt$-double cosets involve a 
weighted length function $\ell'$ on $\Omega'_a$, defined by setting $\ell'(s'_1) = 3$ 
and $\ell'(s'_i) = 1$ if $i \neq 1$.  The details of this length function are contained 
in \cite[Prop.\ 1]{GS2}; it suffices to mention here that
\begin{enum}
	\item
	$[J w J : J] = q^{\ell'(w)}$ for all $w \in \Omega'_a$.
	
	\item
	If $w_1, w_2 \in \Omega'_a$ satisfy $\ell'(w_1) + \ell'(w_2) = \ell'(w_1 w_2)$, then 
	$J w_1 J \cdot J w_2 J = J w_1 w_2 J$.
\end{enum}

For a minimal expression $w = s'_{i_1} \cdots s'_{i_r}$ in $\Omega'_a$, the braid relations in $H_\psi^-$ allow us to define a canonical Hecke operator $T_w = T_{i_1} \cdots T_{i_r}$ supported precisely on the double coset $\Jt w \Jt$.  From the quadratic and braid relations, we have the explicit isomorphism $H_\psi^- \to H^-$ given by $T_i \mapsto U_i$.

As in the proof of Theorem \ref{T:isomorphism1}, one can show that $H_\psi^- \cong H^-$ as Hilbert
algebras, hence Lemma \ref{L:Transfer} will give the coincidence of 
Plancherel measures, under the prescribed normalization.
\end{proof}

\begin{Cor}
\label{C:Plancherel2}
The isomorphism
$H_\psi^- \cong H^-$ preserves the formal degree of the Steinberg representations of the respective
Hecke algebras.
\end{Cor}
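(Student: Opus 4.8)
The plan is to obtain this as an immediate consequence of Theorem \ref{T:isomorphism2}, in exact parallel with the way Corollary \ref{C:Plancherel1} follows from Theorem \ref{T:isomorphism1}. The guiding principle is that, for a normalized Hilbert algebra, the formal degree of a discrete series module is precisely the mass that the associated Plancherel measure places on the corresponding point of the spectrum. Consequently, any isomorphism of Hilbert algebras that transports one Plancherel measure to the other must preserve the formal degrees of corresponding discrete series modules.

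First I would pin down the Steinberg modules on each side. The affine Hecke algebra $H^-$ carries a one-dimensional sign module on which every generator $U_i$ acts by $-1$; this is the module through which the $I^-$-fixed line of the Steinberg representation of $\SO(V^-)$ transforms, and since the Steinberg representation is square-integrable, this module is an atom of the Plancherel measure of $H^-$. In the same way $H_\psi^-$ has the sign module on which each $T_i$ acts by $-1$, which is consistent with the quadratic relations $(T_1+1)(T_1-q^2)=0$ and $(T_i+1)(T_i-q)=0$. Under the explicit isomorphism $T_i \mapsto U_i$ furnished by Theorem \ref{T:isomorphism2}, the sign module of $H_\psi^-$ is carried onto the sign module of $H^-$; these are the respective Steinberg modules of the corollary.

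Next I would invoke the full strength of Theorem \ref{T:isomorphism2}: the isomorphism $H_\psi^- \cong H^-$ is an isomorphism of Hilbert algebras, and under the normalizations $\vol(\Jt) = \tfrac12 q^{en}(q-1)$ and $\vol(I^-) = 1$ the induced Plancherel measures coincide by way of the homeomorphism $\widehat\alpha$ of Lemma \ref{L:Transfer}. Since the Steinberg modules are discrete series, each appears as an atom of its Plancherel measure, and the formal degree is by definition the corresponding atomic mass. Because $\widehat\alpha$ matches the Steinberg point of the spectrum of $H^-$ with that of $H_\psi^-$ and carries one Plancherel measure to the other, the two atomic masses agree, yielding the asserted equality of formal degrees.

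I do not anticipate a genuine obstacle, since all of the substance is already contained in Theorem \ref{T:isomorphism2} and Lemma \ref{L:Transfer}. The one point demanding care is the bookkeeping of the normalization factor $\mu_i(K_i)/\dim\sigma_i$ appearing in Lemma \ref{L:Transfer}: one must confirm that these two constants agree, so that the raw atomic masses, and not merely rescaled versions of them, coincide. With the stated normalization this is automatic, as $\vol(\Jt)/\dim\tau_1^- = 1$ and $\vol(I^-)/\dim\mathbf{1} = 1$; hence the formal degree of the Steinberg representation transfers without any further adjustment.
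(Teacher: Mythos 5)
Your proposal is correct and matches the paper's intended argument: the corollary is stated without proof precisely because, as the Remark following Corollary \ref{C:Plancherel1} explains, the preservation of formal degrees is an immediate consequence of the coincidence of induced Plancherel measures established in Theorem \ref{T:isomorphism2} via Lemma \ref{L:Transfer}, the formal degree being the atomic mass of the Steinberg point. Your identification of the Steinberg modules as the sign modules and your check that the normalization constants $\vol(\Jt)/\dim\tau_1^-$ and $\vol(I^-)/\dim\mathbf{1}$ both equal $1$ are exactly the bookkeeping the paper relies on.
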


\begin{Rmk}
Let $\epsilon$ be $+$ or $-$.  Even for a fixed character $\psi$, the isomorphism $H_\psi^{\epsilon} \cong H^\epsilon$ constructed in these two sections is far from unique: the Hecke algebras admit many inner automorphisms which preserve the quadratic and braid relations of the generators.
\end{Rmk}

\begin{Rmk}
These two Hecke algebra isomorphims may be constructed using additive characters of any conductor; we chose the conductor $2e$ for its convenience.  For an even conductor, the method of construction would essentially go unchanged.  For an odd conductor, different minimal types from \cite{SW} would be employed.  What follows is a very brief summary of the relevant details.

Consider an additive character $\psi'$ given by $\psi'(t) = \psi(\varpi^{-c} t)$.
This new character has conductor $2e + c$; that is, $4 \varpi^c \Ok$ is the largest subgroup of $k$ on which $\psi'$ acts trivially.  The parity of $c$ is important, so we write $c = 2k + \delta$, where $\delta$ is 0 or 1, and we define the diagonal matrix $g_c$ to have $\varpi^{k+\delta}$ in the first $n$ entries and $\varpi^{-k}$ in the last $n$ entries.  

The conjugation, $x \mapsto x_c = g_c^{-1} x g_c$, is an automorphism of $\Sp(W)$.  If $\delta = 0$, $g_c$ is an element of the affine Weyl group and conjugation by $g_c$ is an inner automorphism.  If $\delta = 1$, $g_c$ is not an element of the affine Weyl group (or even of the symplectic group!) and conjugation by $g_c$ is an outer automorphism.  We write $G_c = g_c^{-1} G g_c$ for any subgroup $G$ of $\Sp(W)$ and $\widetilde{G}_c$ for its inverse image in $\Spt(W)$.  In the apartment, conjugation by $g_c$ corresponds to translation by $c z_n$, so the fundamental chamber $I$ with vertices $K_0, \dots, K_n$ is translated to the chamber $I_c$ with vertices $K_{0,c}, \dots, K_{n,c}$.  If $\delta = 0$, then $K_{i,c}$ is conjugate in $\Sp(W)$ to $K_i$.  If $\delta = 1$, then $K_{i,c}$ is conjugate in $\Sp(W)$ to $K_{n-i}$.  For any $\delta$, $I_c$ is conjugate in $\Sp(W)$ to $I$.  The rank 2 picture is as follows.

\begin{center}
\begin{tikzpicture}[scale=3.5]
	\draw [gray] (0,0) -- (0,1.5);
	\draw [gray] (.5,0) -- (.5,1.5);
	\draw [gray] (1,0) -- (1,1.5);
	\draw [gray] (1.5,0) -- (1.5,1.5);
	\draw [gray] (0,0) -- (1.5,0);
	\draw [gray] (0,.5) -- (1.5,.5);
	\draw [gray] (0,1) -- (1.5,1);
	\draw [gray] (0,1.5) -- (1.5,1.5);
	\draw [gray] (0,0) -- (1.5,1.5);
	\draw [gray] (0,1) -- (.5,1.5);
	\draw [gray] (1,0) -- (1.5,.5);
	\draw [gray] (0,1) -- (1,0);
	\draw [gray] (.5,1.5) -- (1.5,.5);
	\draw [fill] (0,0) circle(.02);
	\draw [fill] (.5,0) circle(.02);
	\draw [fill] (.5,.5) circle(.02);
	\draw [fill] (1,.5) circle(.02);
	\draw [fill] (1,1) circle(.02);
	\draw [fill] (1.5,1) circle(.02);
	\draw [fill] (1.5,1.5) circle(.02);
	\draw [ultra thick] (0,0) -- (.5,0) -- (.5,.5) -- (0,0);
	\draw [thick] (.5,.5) -- (1,.5) -- (1,1) -- (.5,.5);
	\draw [thick] (1,1) -- (1.5,1) -- (1.5,1.5) -- (1,1);
	\node at (.35,.15) {$I$};
	\node at (.85,.65) {$I_1$};
	\node at (1.35,1.15) {$I_2$};
	\node [above left] at (0,0) {$K_0$};
	\node [below right] at (.5,.5) {$K_2$};
	\node [above right] at (.5,0) {$K_1$};
	\node [above left] at (.5,.5) {$K_{0,1}$};
	\node [below right] at (1,1) {$K_{2,1}$};
	\node [above right] at (1,.5) {$K_{1,1}$};
	\node [above left] at (1,1) {$K_{0,2}$};
	\node [below right] at (1.5,1.5) {$K_{2,2}$};
	\node [above right] at (1.5,1) {$K_{1,2}$};
\end{tikzpicture}
\end{center}    

If we follow conjugation by the Weil representation $\omega_\psi$, we get a Weil representation $\omega_{\psi,c}$; in particular,
\begin{align*}
	\omega_{\psi,c}\big( \xt(a) \big)	&	=	\omega_\psi\big( \xt( \varpi^{-c} a ) \big),		\\
	\omega_{\psi,c}\big( \htt(a) \big)	&	=	\omega_\psi\big( \htt(a) \big),					\\
	\omega_{\psi,c}\big( \wt \big)		&	=	\omega_\psi\big( \wt(\varpi^c) \big),
\end{align*}
where $\wt(\varpi^c)$ is a lift of $g_c^{-1} w g_c$.  Some straight-forward computations reveal that $\omega_{\psi,c} = \omega_{\psi'}$.
\begin{center}
\begin{tikzpicture}[scale=.4]
	\node at (0,0) {$\Spt(W)$};
	\node at (10,0) {$\Spt(W)$};
	\node at (5,-3) {$\GL(S(Y))$};
	\draw [->] (2,0) -- (8,0);
		\node [above] at (5,0) {$g_c$};
	\draw [->] (.5,-.8) -- (3,-2.5);
		\node at (1,-1.8) {$\omega_{\psi'}$};
	\draw [->] (9.5,-.8) -- (7,-2.5);
		\node at (9.4,-1.8) {$\omega_{\psi,c}$};
\end{tikzpicture}
\end{center}

We note that $x \in \Sp(W)$ stabilizes a lattice $\LL$ if and only if $x_c$ stabilizes $g_c^{-1} \LL$.  For convenience, we write $\LL_{i,c} = g_c^{-1} \LL_i$ and $L_{i,c} = \LL_{i,c} \cap Y$.

For $\delta = 0$, the space $\tau_{i,c} = S( L_{0,c} / 2 L_{i,c} )$ is a type for $\Kt_{i,c}$.  It is easily checked that $\tau_{i,c}$ is isomorphic to $\tau_i$.  The group $J_c = K_{1,c} \cap \dots \cap K_{n,c}$ is isomorphic to $J = K_1 \cap \dots \cap K_n$.

For $\delta = 1$, the space $\tau_{i,c} = S( L_{i,c} / 2 \varpi L_{0,c} )$ is a type for $\Kt_{i,c}$.  It is easily checked that $\tau_{i,c}$ is isomorphic to $\tau_{n-i}$.  The group $J_c = K_{1,c} \cap \dots \cap K_{n,c}$ is isomorphic to $K_0 \cap \dots \cap K_{n-1}$.

In either case, we build the two Hecke algebras
\[
	H_{\psi,c}^+ = \HH{\Spt(W)}{\It_c}{\tau_{0,c}}
		\quad \text{ and } \quad
	H_{\psi,c}^- = \HH{\Spt(W)}{\Jt_c}{\tau_{1,c}^-}.
\]
The same sort of geometry of the apartment employed in the previous sections will yield the existence of generators of $H_{\psi,c}^\pm$ with the same quadratic and braid relations as $H_\psi^\pm$.

The curious reader is referred to \cite{GS2}, where Gan and Savin use an odd conductor in their computation of $H_\psi^-$ under the assumption that $p \neq 2$.
\end{Rmk}

%%%%%%%%%%%%%%%%%%%%%%%%%%%%%%%%%%%%%%%%%%%%%%%%%%%%%%%%%%%%%%%%%%%

\section{\bf Equivalence of categories between $\G_\psi^\pm$ and
  $\SS_0^\pm$}

%%%%%%%%%%%%%%%%%%%%%%%%%%%%%%%%%%%%%%%%%%%%%%%%%%%%%%%%%%%%%%%%%%%

In the category of smooth genuine representations of $\Spt(W)$, let $\G_\psi^\pm$ be the
Bernstein component containing the even/odd Weil representation $\omega_\psi^\pm$.  
In the category of smooth representations of $\SO(V^\pm)$, let $\SS_0^\pm$ be the 
Bernstein component containing the trivial representation.

We will prove our main theorem, namely that there is an
equivalence of categories between $\G_\psi^\epsilon$ and $\SS_0^\epsilon$, where $\epsilon$ is $+$ or $-$.
Our proof essentially follows that of \cite{GS2}.

%%%%%%%%%%%%%%%%%%%%%%%%%%%%%%%%%%%%%%%%%%%%%%%%%%%%%%%%%%%%%%%%%%%

\subsection{\bf Equivalence between $\G_\psi^+$ and
  $\SS_0^+$}

%%%%%%%%%%%%%%%%%%%%%%%%%%%%%%%%%%%%%%%%%%%%%%%%%%%%%%%%%%%%%%%%%%%

Let $U$ (resp. $U^-$) be the unipotent radical in $\Sp(W)$ generated by positive (resp. negative) root groups.  Let $\Bt=\Tt
U\subseteq\Spt(W)$ be the preimage of the Borel subgroup $B=TU$ of
$\Sp(W)$. (Recall that the unipotent radical $U$ splits in $\Spt(W)$.)

An element $t$ of the maximal torus $T$ may be expressed uniquely as
	\[
		t = (t_1, \dots, t_n) = h_{2\epsilon_1}(t_1) \cdots h_{2\epsilon_n}(t_n),
	\]
hence we have a canonical lift of $t$ given by 
	\[
		\tt = \htt_{2\epsilon_1}(t_1) \cdots \htt_{2\epsilon_n}(t_n).
	\]
With this convention, multiplication in $\Tt$ is given by
	\[
		\tt \cdot \uu 
				= (t,u) \; \widetilde{t u} 
				= \prod_{i = 1}^n (t_i, u_i) \; \htt_{2\epsilon_i}(t_i u_i),
	\]
where the cocycle $(t,u) \in \{\pm1\}$ is the product of Hilbert symbols $(t_i, u_i)$ on $k$.  Note that multiplication in $T$ is commutative.

Recalling the action of $T$ on $Y$ by $ty = (t_1 y_1, \dots, t_n y_n)$, the action of $\tt$ on $S(Y)$ is given by
	\[
		\tt \phi(y) = \beta_t |\det t|^{^1\!\!/\!_2} \phi(ty),
	\]
where $\beta_t$ is a 4th root of unity satisfying $\beta_t \beta_u = (t,u) \beta_{tu}$.

Given a character $\chi = (\chi_1, \dots, \chi_n)$ on $T$, we define a
genuine character $\chit$ on $\Tt$ by
	\[
		\chit(\,\tt\,) = \chi(t) \beta_t.
	\]
We extend this character trivially to all of $\Bt$ and define $I(\chit)$ to be the normalized 
induced representation $\Ind_{\Bt}^{\Spt(W)}\chit$.  By Frobenius reciprocity,
\begin{equation}\label{E:Frobenius}
\Hom_{\Spt(W)}(\pi, I(\chit))\cong\Hom_{\Tt}(\pi_U, \chit),
\end{equation}
where $\pi$ is any smooth representation of $\Spt(W)$ and $\pi_U$ is
the normalized Jacquet module with respect to the Borel $\Bt$.

\begin{Lem}
The Bernstein component $\G_\psi^+$ is precisely the component whose irreducible
representations are submodules of $I(\chit)$ for some unramified character $\chi$.
\end{Lem}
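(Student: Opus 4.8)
The plan is to identify the right-hand side with a single Bernstein component $\SS$ and then to locate $\omega_\psi^+$ inside it. Recall that a Bernstein component is determined by the inertial class of the cuspidal support of its irreducible objects. First I would observe that the genuine unramified characters of $\Tt$ are exactly the twists $\chit = \chit_0 \cdot \chi$ of the fixed genuine character $\chit_0$ (the case $\chi = 1$, so $\chit_0(\tt) = \beta_t$) by unramified characters $\chi$ of $T$, and that these all lie in a single inertial class. Consequently, the cuspidal support of any irreducible subquotient of $I(\chit)$, for $\chi$ unramified, is an unramified twist of $(\Tt, \chit_0)$, so all such subquotients---in particular all irreducible submodules---belong to one and the same component $\SS$.

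Next I would check that $\SS$ is \emph{exactly} the set of irreducibles embedding into some unramified $I(\chit)$. One inclusion is immediate from the previous paragraph. For the converse, let $\pi$ be irreducible with cuspidal support $(\Tt, \chit)$ for $\chi$ unramified; since $\pi$ is not supercuspidal, its Jacquet module $\pi_U$ is nonzero and admits some Weyl translate $w\chit$ as a quotient. As $w\chit$ is again unramified, Frobenius reciprocity \eqref{E:Frobenius} produces an embedding $\pi \hookrightarrow I(w\chit)$ into an unramified principal series. Thus $\SS$ is precisely the component whose irreducibles are submodules of $I(\chit)$ with $\chi$ unramified, and it remains only to show $\omega_\psi^+ \in \SS$.

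For this, by \eqref{E:Frobenius} it suffices to exhibit a nonzero $\Tt$-equivariant map $(\omega_\psi^+)_U \to \chit$ for some unramified $\chi$, i.e. to compute the Jacquet module of $\omega_\psi^+$ along $\Bt$. Realizing $\omega_\psi^+$ on the even functions in $S(Y)$ and using that $\xt_{2\epsilon_j}(u)$ acts by $\phi(y) \mapsto \psi(u y_j^2)\phi(y)$ (fact (2) in the proof of Lemma \ref{L:support}), the coinvariants under the root groups $\xt_{2\epsilon_j}$ localize every function to the origin $y = 0$; the remaining positive root groups $\xt_{\epsilon_i \pm \epsilon_j}$ fix the origin and hence act trivially on the resulting line, so the full Jacquet module is the one-dimensional space of evaluations $\phi \mapsto \phi(0)$. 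On this line the torus acts through $\tt \mapsto \beta_t\,|\det t|^{^1\!\!/\!_2}$, which is the genuine character $\chit$ attached (after the normalization by $\delta_B^{1/2}$) to an unramified $\chi$. Since $\phi \mapsto \phi(0)$ is not identically zero on even functions, this character is a nonzero quotient of $(\omega_\psi^+)_U$, whence $\omega_\psi^+ \hookrightarrow I(\chit)$ and $\omega_\psi^+ \in \SS$. Therefore $\G_\psi^+ = \SS$, as claimed.

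I expect the Jacquet-module computation of the last paragraph to be the main obstacle: one must verify carefully that coinvariance under all positive root groups collapses $S(Y)^+$ to the single evaluation functional, and, in even residual characteristic, that the Weil factors $\beta_t$ combine with $|\det t|^{^1\!\!/\!_2}$ into a genuinely unramified character $\chit$---a point where the conductor $2e$ and the precise normalization of $\beta_t$ must be tracked.
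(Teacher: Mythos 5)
Your proof is correct and uses the same key idea as the paper: the evaluation functional $\phi \mapsto \phi(0)$ on $S(Y)^+$ factors through $(\omega_\psi^+)_U$ and yields, via Frobenius reciprocity \eqref{E:Frobenius}, an embedding $\omega_\psi^+ \subseteq I(\chit)$ for an unramified $\chi$. The paper's proof is just this one sentence, leaving the Bernstein-component bookkeeping (that the irreducible submodules of the unramified $I(\chit)$ form a single component) and the one-dimensionality of the Jacquet module implicit, so your version is simply a fuller write-up of the same argument.
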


\begin{proof}
The functional $l:S(Y)^+\rightarrow\C$ defined by $l(\phi)=\phi(0)$
factors through the Jacquet module $(\omega_\psi^+)_U$ and gives a
non-trivial element in $\Hom_{\Tt}((\omega_\psi^+)_U, \chit)$ for some
unramified $\chi$, which in turn gives an embedding $\omega_\psi^+\subseteq
I(\chit)$ via Frobenius reciprocity.
\end{proof}

The Iwahori subgroup $\It$ admits a factorizatioin 
	\[
		\It = I_{U^-} \It_{T} I_U,
	\]
where $I_{U^-} = \It \cap U^-$, $\It_{T} = \It \cap \Tt$, and $I_U = \It \cap U$.  (Note that $I_U$ and $I_{U^-}$ split in the central extension $\Spt(W)$.)

Now let us define the ``Jacquet module'' $(\tau_0)_U$ of $\tau_0$ with respect to $I_U$;
that is, $(\tau_0)_U$ is the quotient of $\tau_0 = S(L_0 / 2L_0)$ by
	\[
		\big\la \tau_0(u)\phi-\phi : u \in I_U, \phi \in \tau_0 \big\ra,
	\]
which may be viewed as a representation of $\It_T$. 

\begin{Lem}\label{L:Jacquet_tau_0}
The space $(\tau_0)_U$ is one dimensional and spanned by the image of the
characteristic function of $2L_0$. Moreover each element
$\tt \in \It_T$ acts by $\beta_t$ on $(\tau_0)_U$.
\end{Lem}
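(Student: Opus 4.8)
The plan is to work in the explicit Schr\"odinger model $\tau_0 = S(L_0/2L_0)$, where $L_0 = \Ok\f_1 \oplus \cdots \oplus \Ok\f_n$, using the basis of $\tau_0$ given by the characteristic functions $\delta_y$ of the cosets $y + 2L_0$ for $y \in L_0/2L_0$; under the inclusion $\tau_0 \subseteq S(Y)$ the characteristic function of $2L_0$ is exactly $\delta_0$. First I would obtain the upper bound $\dim (\tau_0)_U \leq 1$ using only the long-root subgroups $\xt_{2\epsilon_j}(u)$, $u \in \Ok$, which lie in $\It_U$ and act diagonally by $\xt_{2\epsilon_j}(u)\delta_y = \psi(u y_j^2)\delta_y$. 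Since $\psi$ has conductor $2e$, we have $\psi(u y_j^2) = 1$ for all $u \in \Ok$ precisely when $y_j^2 \in 4\Ok$, i.e. when $y_j \in 2\Ok$. Thus if $y \neq 0$ in $L_0/2L_0$, some coordinate satisfies $y_j \notin 2\Ok$, and choosing $u$ with $\psi(u y_j^2) \neq 1$ exhibits $\delta_y = (\psi(u y_j^2)-1)^{-1}\big( \xt_{2\epsilon_j}(u)\delta_y - \delta_y \big)$ as an element of the subspace of Jacquet relations. Hence every $\delta_y$ with $y \neq 0$ maps to zero in $(\tau_0)_U$, so the image of $\delta_0$ spans.

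Next I would establish the matching lower bound by producing an $\It_U$-invariant functional that does not vanish on $\delta_0$. The natural candidate is evaluation at the origin, $l(\phi) = \phi(0)$, which satisfies $l(\delta_0) = 1$. To check $\It_U$-invariance it suffices to verify it on the positive root subgroups generating $\It_U$: the Siegel-type subgroups ($\alpha = \epsilon_i + \epsilon_j$ or $2\epsilon_i$) act by multiplication by $\psi$ of a form in $y$ which equals $1$ at $y = 0$, while the Levi-type subgroups ($\alpha = \epsilon_i - \epsilon_j$) act by a linear substitution of the variable $y$ fixing the origin; in either case the value at $0$ is unchanged. Since $l$ is $\It_U$-invariant it factors through $(\tau_0)_U$ and is nonzero on the image of $\delta_0$. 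Combined with the previous step, $(\tau_0)_U$ is one dimensional, spanned by the image of the characteristic function of $2L_0$.

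Finally, for the torus action I would note that $\It_T$ normalizes $\It_U$, so $(\tau_0)_U$ is genuinely an $\It_T$-module, and I would compute directly on $\delta_0$. For $\tt \in \It_T$ we have $t = (t_1, \dots, t_n)$ with each $t_i \in \Ok^\times$, so $|\det t|^{1/2} = 1$ and $t$ preserves the lattice $2L_0$; the formula $\tt\phi(y) = \beta_t |\det t|^{1/2}\phi(ty)$ then gives $\tt\delta_0 = \beta_t \delta_0$ already inside $\tau_0$. Passing to the quotient, $\tt$ acts by $\beta_t$ on $(\tau_0)_U$, as claimed.

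The main obstacle is the verification of $\It_U$-invariance of the evaluation functional in the second step: it is the only place requiring the explicit action of the non-long positive root subgroups on $S(Y)$, in particular the geometric action of the $GL_n$-Levi subgroups. Everything else reduces to the diagonal long-root formula and the conductor normalization of $\psi$. Alternatively, one could bypass the functional by observing that, since $\tau_0$ is finite dimensional, the $\It_U$-action factors through a finite quotient, whence the coinvariants $(\tau_0)_U$ are canonically isomorphic as an $\It_T$-module to the invariants $\tau_0^{\It_U}$; the long-root computation then identifies the latter with $\C\,\delta_0$ directly.
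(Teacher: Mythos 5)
Your proposal is correct and follows essentially the same route as the paper: kill every $\delta_y$ with $y\neq 0$ via the long-root elements $\xt_{2\epsilon_j}(u)$ and the conductor condition on $\psi$, then observe that $\It_U$ fixes $\delta_0$ and that $\It_T$ acts on it by $\beta_t$. You are in fact slightly more careful than the paper at two points --- choosing $u\in\Ok$ with $\psi(uy_j^2)\neq 1$ rather than taking $u=1$, and justifying (via the evaluation functional, or equivalently the identification of invariants with coinvariants for the finite quotient through which $\It_U$ acts) why the invariant vector $\delta_0$ has nonzero image in the quotient --- but these are refinements of the same argument, not a different one.
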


\begin{proof}
Suppose that $\phi \in \tau_0$ is supported on $a + 2L_0$ for $a \in L_0 \smallsetminus 2L_0$, and 
let $i$ be such that $a_i \in \Ok^\times$.  The element $\xt_{2 \epsilon_i}(1)$ acts on $\phi$ by the 
constant $\psi(a_i^2) \neq 1$.  Therefore, the image of $\phi$ in $(\tau_0)_U$ is trivial.

On the other hand, let $\phi$ be the characteristic function on $2L_0$.  Using the formulas in Section \ref{S:Weil} for the action of the positive root groups, it is simple to check that $I_U$ acts trivially on $\phi$.  Moreover, we know that $\tt \in \It_T$ acts by
$\tt \phi(y) = \beta_t \phi(ty) = \beta_t \phi(y)$.
\end{proof}

\begin{Thm}\label{T:equivalence}
The functor from the category $\G_\psi^+$ to the category of $H_\psi^+$-modules, given by
	\[
		\pi \mapsto (\pi \otimes \tau_0^*)^{\It},
	\]
is an equivalence of categories.   In particular, there is an equivalence of categories 
between $\G_\psi^+$ and $\SS_0^+$
given by the isomorphism $H_\psi^+\cong H^+$ of Hecke algebras.
Furthermore, this equivalence preserves the temperedness and square integrability of representations.
\end{Thm}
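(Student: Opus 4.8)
The plan is to factor the asserted equivalence $\G_\psi^+ \simeq \SS_0^+$ through a category of Hecke-algebra modules. I would establish three links and then treat the analytic statement separately. First, that the exact functor $M_{\tau_0}\colon \pi \mapsto (\pi\otimes\tau_0^*)^{\It} \cong \Hom_{\It}(\tau_0,\pi)$ restricts to an equivalence $\G_\psi^+ \simeq H_\psi^+\text{-Mod}$. Second, that the algebra isomorphism $H_\psi^+ \cong H^+$ of Theorem \ref{T:isomorphism1} transports this to an equivalence $\G_\psi^+ \simeq H^+\text{-Mod}$. Third, that the classical Borel--Casselman--Bernstein theory identifies $\SS_0^+$, the principal unramified (Iwahori-spherical) block of $\SO(V^+)$, with $H^+\text{-Mod}$ via $\rho \mapsto \rho^{I^+}$. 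Composing the three gives $\G_\psi^+ \simeq \SS_0^+$.

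The central step is the first link. Since $\It$ is open and compact, $M_{\tau_0}$ is exact, so it is enough to understand it on the standard modules $I(\chit)$ and to know that $\tau_0$ detects exactly the block $\G_\psi^+$. For the computation on $I(\chit)$, I would combine the Iwahori factorization $\It = \It_{U^-}\It_T\It_U$ with Lemma \ref{L:Jacquet_tau_0}: because $(\tau_0)_U$ is one dimensional with $\It_T$ acting by $\beta_t$, a Mackey/Frobenius argument identifies $(I(\chit)\otimes\tau_0^*)^{\It}$ with the standard $H_\psi^+$-module attached to $\chit$, which is nonzero precisely when $\chi$ is unramified. On the orthogonal side, $\rho \mapsto \rho^{I^+}$ sends the unramified principal series of $\SO(V^+)$ to the standard $H^+$-module attached to the same character, and under $H_\psi^+ \cong H^+$ these standard modules are identified. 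As both functors are exact and carry the principal series---which generate the two blocks---to the same $H^+$-module, they match composition series and intertwining operators, yielding a bijection on irreducibles. Together with the minimal $K$-type analysis of \cite{SW}, which guarantees that $\tau_0$ occurs in no irreducible outside $\G_\psi^+$, this shows $(\It,\tau_0)$ is a \emph{type} and upgrades the bijection to the equivalence $\G_\psi^+ \simeq H_\psi^+\text{-Mod}$; the whole argument follows \cite{GS2}.

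For the final assertion I would argue that temperedness and square integrability are read off from the Hilbert-algebra structure and so are transported by $H_\psi^+ \cong H^+$. By Theorem \ref{T:isomorphism1} this is an isomorphism of normalized Hilbert algebras with involution, and for the indicated normalizations of Haar measure it preserves Plancherel measure; Lemma \ref{L:Transfer} then gives a measure-preserving homeomorphism $\widehat\alpha$ between the supports of the Plancherel measures on $C^*(\It,\tau_0)$ and $C^*(I^+,\mathbf 1)$. An irreducible representation in either block is tempered exactly when the associated Hecke-algebra module lies in this support, and square-integrable exactly when that module is an atom of the Plancherel measure (equivalently, has strictly positive formal degree). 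Since $\widehat\alpha$ preserves both the support and the measure, the equivalence preserves temperedness and square integrability; this simultaneously refines Corollary \ref{C:Plancherel1} to the level of individual representations.

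The step I expect to be the main obstacle is the converse half of the type condition---that $\tau_0$ occurs in no representation outside $\G_\psi^+$. This is precisely where the fine analysis of the $K$-types of the Weil representation in arbitrary residual characteristic from \cite{SW} is indispensable, since the comparison available when $p \neq 2$ breaks down in residual characteristic $2$. Once this minimality is granted, the remaining ingredients---exactness of $M_{\tau_0}$, the standard-module computation via Lemma \ref{L:Jacquet_tau_0}, and the Plancherel transfer of Lemma \ref{L:Transfer}---are formal and parallel \cite{GS2}.
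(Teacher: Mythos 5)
Your overall architecture---factoring through $H_\psi^+$-modules, invoking the Bushnell--Kutzko type criterion, and reading temperedness and square-integrability off the Plancherel transfer of Lemma \ref{L:Transfer}---matches the paper's, and your treatment of the final analytic assertion is essentially the paper's argument. The gap is in the step you yourself flag as the main obstacle: you propose to obtain the statement that $\tau_0$ occurs in no irreducible representation outside $\G_\psi^+$ from the $K$-type analysis of \cite{SW}. That reference analyzes the decomposition of the Weil representation itself under the maximal compact subgroups; it says nothing about which \emph{other} irreducible representations of $\Spt(W)$ contain $\tau_0$, so it cannot supply this half of the type condition, and your Mackey/Frobenius computation on the standard modules $I(\chit)$ only gives the opposite implication (that every irreducible in the block contains the type).

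The paper closes this gap by a different mechanism. It introduces the natural surjection $r:(\pi\otimes\tau_0^*)^{\It}\to(\pi_U\otimes(\tau_0^*)_U)^{\It_{T}}$ (a variant of Jacquet's lemma) and proves that $r$ is \emph{injective}: for $v\in\ker r$ there is an open compact $U_v\subseteq U$ with $\int_{U_v}\pi(u)v\,du=0$; choosing a dominant translation $\lambda$ with $\lambda^{-1}\It_U\lambda\supseteq U_v$, one computes $\pi(T_\lambda)v=0$ for the Hecke operator $T_\lambda$ supported on $\It\lambda\It$, and since $T_\lambda$ is a product of the invertible generators produced in Theorem \ref{T:isomorphism1}, it follows that $v=0$. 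Combined with Lemma \ref{L:Jacquet_tau_0}, the resulting isomorphism $r$ shows that any irreducible $\pi$ with $(\pi\otimes\tau_0^*)^{\It}\neq 0$ satisfies $\Hom_{\Tt}(\pi_U,\chit)\neq 0$ for some unramified $\chi$, hence embeds in $I(\chit)$ and lies in $\G_\psi^+$. This is where the invertibility clause of Theorem \ref{T:isomorphism1} does real work, and it is the ingredient your proposal is missing; once it is in place, condition (iii) of \cite[3.11]{BK} is verified and the equivalence follows as you describe.
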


\begin{proof}
We have the natural surjection
\[
r:(\pi\otimes\tau_0^*)^{\It}\rightarrow (\pi_U\otimes(\tau^*_0)_U)^{\It_T},
\]
which is a slight variant of what is called ``Jacquet's Lemma'' in
\cite[64-65]{B2}. (To prove our version, one can follow the argument
there. Also, see \cite[Prop. 3.5.2]{B1}.)

We first show that $r$ is an isomorphism.  Suppose that $v \in \ker r$, i.e., that there exists an open compact subgroup $U_v$ of $U$
such that $\int_{U_v} \pi(u) v \, du = 0$.  

For a translation $\lambda = (\lambda_1, \dots, \lambda_n)$
in $D \subseteq \Omega_a$, we write 
	\[
		\lambda = \htt_{2 \epsilon_1}(\varpi^{\lambda_1}) \cdots
						\htt_{2 \epsilon_n}(\varpi^{\lambda_n})
	\] 
as its representative in $\Tt$.  Take $\lambda \in D$ such that 
$\lambda_1 \ge \ldots \ge \lambda_n \ge 0$
and $\lambda^{-1} I_U \lambda \supseteq U_v$.  Then
	\[
		\It \lambda \It = \bigcup_{i = 1}^{q^{\ell(\lambda)}} \lambda u_i \It
	\]
where the $u_i$ are representatives of the $\It_U$-cosets in 
$\lambda^{-1} I_U \lambda$.  Let $T_\lambda$ be the 
Hecke algebra element supported on $\It \lambda \It$ obtained using a minimal expression 
for $\lambda \in \Omega_a$ as in Theorem \ref{T:isomorphism1}.  Then
	\[
		\pi(T_\lambda) v 
			= \pi(\lambda) \sum_{i = 1}^{q^{\ell(\lambda)}} \pi(u_i)v
			= \pi(\lambda) \int_{\lambda^{-1}\It_U \lambda} \pi(u) v \, du
			= 0.
	\]
The element $T_\lambda$ is invertible, as it is the product of invertible elements, 
hence $v = 0$ and $r$ is injective.

Let $\pi$ be an irreducible representation of $\Spt(W)$ such that
$(\pi\otimes\tau_0^*)^{\It}\neq 0$.  As $r$ is an isomorphism,
$(\pi_U\otimes(\tau^*_0)_U)^{\It_T}\neq 0$.  This implies that
$\Hom_{\Tt}(\pi_U, \chit) \neq 0$ for some unramified $\chi$, 
since $\tt \in \It_T$ acts by $\beta_t$ on $(\tau_0)_U$.  Therefore,
by Frobenius reciprocity, we have that $\pi$ is a subrepresentation
of $I(\chit)$.

Conversely, let $\pi$ be an irreducible submodule of $I(\chit)$ for some
unramified $\chi$.  By Frobenius reciprocity, we have that
	\[
		0 \neq (\pi_U \otimes (\tau_0^*)_U)^{\It_T}
			\cong (\pi \otimes \tau_0^*)^{\It}.
	\]
Thus, condition (iii) of \cite[3.11]{BK} is satisified, which proves
the equivalence of categories. 

To complete the proof, we note that, as categories,
	\[
		\SS_0^+ 
			\cong H^+\!\text{-modules}
			\cong H_\psi^+\!\text{-modules}
			\cong \G_\psi^+;
	\] 
moreover, the trivial representation of $\SO(V^+)$ corresponds to the trivial module of 
$H^+ \cong H_\psi^+$, and hence to the even Weil represntation $\omega_\psi^+$.

Finally, to show that the equivalence preserves temperedness and
square integrability, let us note that the equivalence
$H_\psi^+\!\text{-modules}\cong \G_\psi^+$ implies that $(\It,
\tau_0)$ is an $\mathfrak{s}$-type in the sense of
\cite[1.6]{BHK}. (To see this, let $e\in H_{\psi}^+$ be the idempotent corresponding
to $\tau_0$ and $\mathfrak{R}_e(\Spt(W))$ the full subcategory of
the category of smooth genuine representations of $\Spt(W)$ as defined
in \cite[1.4]{BHK}. Then we have the functor
$\mathfrak{R}_e(\Spt(W))\rightarrow  H_\psi^+\!\text{-modules}$ given
by $\pi\mapsto (\pi\otimes\tau_0^*)^{\It}$. Clearly this functor
composed with the equivalence $H_\psi^+\!\text{-modules}\cong
\G_\psi^+$ is the identity, which implies
$\mathfrak{R}_e(\Spt(W))=\G_\psi^+$. Hence $(\It,
\tau_0)$ is an $\mathfrak{s}$-type with $\mathfrak{s}$ being the inertial
equivalence class representanted by $(\Bt, \mathbf{1})$.) Hence by
the first paragraph of \cite[0.6]{BHK}, one can see that all the
irreducible tempered representations in $\G_\psi^+$ are in $_r\widehat{G}(\tau_0)$
with $G=\Spt(W)$. The same applies to the group $\SO(V^+)$. Furthermore,
from \cite[5.1]{BHK}, the equivalence $\SS_0^+\cong
\G_\psi^+$ restricts to the homeomorphism $\widehat{\alpha}$ of Lemma
\ref{L:Transfer} with $G_1=\Spt(W), \sigma_1=\tau_0, G_2=\SO(V^+),
\sigma_2=\mathbf{1}$. Hence, we see that the equivalence preserves
temperedness and square integrability. 
\end{proof}

\begin{Rmk}
In \cite{GS2}, the preservation of temperedness and square
integrability is shown by using Casselman's criterion.  In this paper,
however, we invoke the theory of \cite{BHK}, which can be applied 
once the Hecke algebra isomorphism $H_\psi^+ \cong H^+$ is shown to be
an isomorphism of Hilbert algebras. Indeed, this is one of the benefits
of showing that $H_\psi^+ \cong H^+$ is not just an
algebra isomorphism but a Hilbert algebra isomorphism. 
\end{Rmk}

%%%%%%%%%%%%%%%%%%%%%%%%%%%%%%%%%%%%%%%%%%%%%%%%%%%%%%%%%%%%%%%%%%%

\subsection{\bf Equivalence between $\G_\psi^-$ and
  $\SS_0^-$}

%%%%%%%%%%%%%%%%%%%%%%%%%%%%%%%%%%%%%%%%%%%%%%%%%%%%%%%%%%%%%%%%%%%

Consider the partial flag
	\[
		X_n \subseteq X_{n-1} \subseteq \dots \subseteq X_2
	\]
where $X_i$ is the $k$-span of $\e_i, \dots, \e_n$.  Let $P = MN$ be the
parabolic subgroup which is the stabilizer of this partial flag.  Let
$W_1$ be the symplectic subspace spanned by $\{ \e_1, \f_1 \}$ so that
$\Sp(W_1) = \SL_2(k)$.  Define $\omega_{\psi,1}$ to be the Weil representation
of $\Spt(W_1)$, realized as a representation in the space $S(k \f_1)$.  
This representation decomposes
into even and odd parts; the odd part $\omega_{\psi,1}^-$ is supercuspidal.

Let $\Pt=\Mt N$ be the preimage of $P$ in $\Spt(W)$. Each element
$m\in\Mt$ is uniquely written as
	\[
		m = m_1 \cdot \htt_{2\epsilon_2}(t_2)\cdots \htt_{2\epsilon_n}(t_n)
	\]
where $t_i\in k^\times$ and $m_1\in\Spt(W_1)$. Given a
character $\chi = (\chi_2,\dots,\chi_n)$, we define a
genuine representation $\omega_{\psi, 1}^- \otimes \chit$ of $\Mt$ by
\[
	\big[ \omega_{\psi, 1}^- \otimes \chit \big](m)
	=	\omega_{\psi, 1}^-(m_1) \prod_{i = 2}^n \chi_i(t_i) \beta_{t_i}.
\]
We set
\[
I(\omega_{\psi, 1}^- \otimes \chit)
=\Ind_{\Pt}^{\Spt(W)}(\omega_{\psi, 1}^- \otimes \chit)
\]
to be the normalized induced representation. For a smooth representation $\pi$ of $\Spt(W)$ and 
$\pi_N$ its normalized Jacquet module, we have Frobenius reciprocity:
\[
\Hom_{\Spt(W)}(\pi, I(\omega_{\psi, 1}^- \otimes \chit))
\cong\Hom_{\Mt}(\pi_N, \omega_{\psi, 1}^- \otimes \chit).
\]

\begin{Lem}
The Bernstein component $\G_\psi^-$ is precisely the component whose irreducible
representations are submodules of $I(\omega_{\psi, 1}^- \otimes \chit)$
for an unramified character $\chi$.
\end{Lem}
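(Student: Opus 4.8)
The plan is to mirror the argument for $\G_\psi^+$: I would exhibit an explicit, nonzero $\Mt$-equivariant map out of the Jacquet module $(\omega_\psi^-)_N$ onto $\omega_{\psi, 1}^- \otimes \chi$ for a suitable unramified $\chi$, pass through Frobenius reciprocity \eqref{E:Frobenius2} to embed $\omega_\psi^-$ into $I(\omega_{\psi, 1}^- \otimes \chi)$, and then invoke the supercuspidality of $\omega_{\psi,1}^-$ together with the theory of cuspidal supports to identify the Bernstein block. Realizing $\omega_\psi^-$ on $S(Y)^-$ with $Y = \Span\{\f_1, \dots, \f_n\}$, I would define the restriction map
\[
	R : S(Y)^- \to S(k \f_1), \qquad R(\Phi) = \Phi|_{y_2 = \cdots = y_n = 0},
\]
and first observe that $R$ sends a jointly odd function to an odd function of $y_1$, so it takes values in $S(k \f_1)^-$, the space realizing $\omega_{\psi,1}^-$; moreover $R$ is visibly surjective.

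The heart of the matter is to check that $R$ is $N$-invariant, hence descends to $(\omega_\psi^-)_N$, and is $\Mt$-equivariant up to an unramified character. Since $\Mt = \Spt(W_1) \times \{ \htt_{2\epsilon_2}(t_2) \cdots \htt_{2\epsilon_n}(t_n) \}$, the unipotent radical $N$ is generated by the root groups of all positive roots other than $2\epsilon_1$, namely those of $2\epsilon_j$ for $j \geq 2$ and of $\epsilon_i \pm \epsilon_j$ for $i < j$. Using the explicit Weil-representation formulas, each such generator acts either by multiplication by a function of $y$ that restricts to $1$ on the subspace $\{y_2 = \cdots = y_n = 0\}$ (the cases $\xt_{2\epsilon_j}(u)$ and $\xt_{\epsilon_i + \epsilon_j}(u)$, which multiply by $\psi$ of a quadratic form vanishing there) or by a unipotent change of variables fixing that subspace pointwise (the case $\xt_{\epsilon_i - \epsilon_j}(u)$); in either case $R(\omega_\psi(u)\Phi) = R(\Phi)$. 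For the Levi action, $\Spt(W_1)$ acts only in the $y_1$-slot via $\omega_{\psi,1}$, so $R$ intertwines it with $\omega_{\psi,1}^-$, while each $\htt_{2\epsilon_i}(t_i)$ with $i \geq 2$ scales $R(\Phi)$ by a factor of the form $\beta |t_i|^{1/2}$, which after the modulus twist of the normalized Jacquet functor defines an unramified character $\chi = (\chi_2, \dots, \chi_n)$. This produces a nonzero element of $\Hom_{\Mt}\big((\omega_\psi^-)_N, \omega_{\psi,1}^- \otimes \chi \big)$.

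By \eqref{E:Frobenius2} this yields a nonzero map $\omega_\psi^- \to I(\omega_{\psi, 1}^- \otimes \chi)$, which is an embedding because $\omega_\psi^-$ is irreducible. Finally, since $\omega_{\psi,1}^-$ is supercuspidal, the collection of irreducible representations embedding in $I(\omega_{\psi, 1}^- \otimes \chi')$ as $\chi'$ ranges over unramified characters is exactly the Bernstein block attached to the inertial class of $(\Mt, \omega_{\psi,1}^-)$; as $\G_\psi^-$ is by definition the block containing $\omega_\psi^-$ and the embedding above places $\omega_\psi^-$ in this block, the two coincide, which is the assertion of the lemma.

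I expect the main obstacle to be the bookkeeping in the middle paragraph: verifying the $N$-invariance root group by root group and, above all, pinning down the metaplectic cocycle contributions together with the modulus character $\delta_P^{1/2}$ precisely enough to certify that the resulting character $\chi$ on the $\GL_1$-factors is genuinely unramified. This is exactly where the even-residual-characteristic normalizations (the factors $\beta_t$ and the power of $|2|$) must be tracked, just as in the analogous computation in \cite{GS2}.
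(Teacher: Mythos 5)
Your proposal is correct and follows essentially the same route as the paper: the paper's proof likewise takes the restriction functional $S(Y)^- \to S(k\f_1)^-$, observes that it factors through $(\omega_\psi^-)_N$ and gives a nonzero element of $\Hom_{\Mt}((\omega_\psi^-)_N, \omega_{\psi,1}^-\otimes\chi)$ for some unramified $\chi$, and concludes via Frobenius reciprocity. Your write-up simply spells out the root-group verifications and the cuspidal-support identification that the paper leaves implicit.
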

\begin{proof}
The functional $l:S(Y)^-\rightarrow S(k\f_1)^-$, defined by
restriction of functions from $Y$ to $k\f_1$,
factors through the Jacquet module $(\omega_\psi^-)_N$.  Therefore, there is a
non-trivial element in $\Hom_{\Mt}((\omega_\psi^-)_N,
\omega_{\psi, 1}^- \otimes \chit)$ for some unramified $\chi$, which
gives an embedding $\omega_\psi^-\subseteq I(\omega_\psi^-\otimes\chit)$ via Frobenius reciprocity.
\end{proof}

\begin{Thm}\label{T:equivalence}
The functor from the category $\G_\psi^-$ to the category of $H_\psi^-$-modules, given by
	\[
		\pi \mapsto (\pi \otimes (\tau_1^-)^*)^{\Jt},
	\]
is an equivalence of categories.   In particular, there is an equivalence of
categories between $\G_\psi^-$ and $\SS_0^-$ given by the isomorphism
$H_\psi^- \cong H^-$ of Hecke algebras.
Furthermore, this equivalence preserves the temperedness and square integrability of representations.
\end{Thm}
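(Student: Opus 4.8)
The plan is to transcribe the argument just given for $\G_\psi^+$, replacing the Iwahori $\It$ by $\Jt$, the type $\tau_0$ by $\tau_1^-$, the Borel $\Bt = \Tt U$ by the parabolic $\Pt = \Mt N$, and the affine Weyl group $\Omega_a$ by $\Omega'_a$. As in the even case, the starting point is a version of Jacquet's Lemma (see \cite[pp.~64--65]{B2} or \cite[Prop.~3.5.2]{B1}) producing a natural surjection
\[
    r : (\pi \otimes (\tau_1^-)^*)^{\Jt} \longrightarrow (\pi_N \otimes ((\tau_1^-)^*)_N)^{\Jt \cap \Mt},
\]
where $(\tau_1^-)_N$ is the Jacquet module of $\tau_1^-$ with respect to $\Jt \cap N$, viewed as a $\Jt \cap \Mt$-module. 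The first task is to show that $r$ is an isomorphism: surjectivity is built into Jacquet's Lemma, so only injectivity requires work.

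First I would establish injectivity exactly as before. Given $v \in \ker r$, there is a compact open $N_v \subseteq N$ with $\int_{N_v} \pi(u) v \, du = 0$. The point to check is that, although the translations in $\Omega'_a$ move only in the directions $\epsilon_2, \dots, \epsilon_n$, every root group occurring in $N$ pairs nontrivially with the span of these directions; this holds because $P$ is the stabilizer of a flag inside $\Span(\e_2, \dots, \e_n)$, so each positive root in $N$ is of the form $\epsilon_i \pm \epsilon_j$ with $j \geq 2$ or $2\epsilon_i$ with $i \geq 2$. Hence one can choose a sufficiently dominant translation $\lambda \in \Omega'_a$ with $\lambda^{-1}(\Jt \cap N)\lambda \supseteq N_v$, and the Hecke operator $T_\lambda$ built from a minimal expression for $\lambda$ (now using the weighted length $\ell'$ of Lemma \ref{L:support}) annihilates $v$. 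Since $T_\lambda$ is a product of the invertible generators $T_i$ of Theorem \ref{T:isomorphism2}, it is invertible, and therefore $v = 0$.

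The main obstacle is the computation of $(\tau_1^-)_N$, the analogue of Lemma \ref{L:Jacquet_tau_0}. Writing $\tau_1^- = S(\Ok/2\varpi\Ok)^- \otimes S(\Ok^{n-1}/2\Ok^{n-1})$, I expect the Jacquet functor with respect to $\Jt \cap N$ to collapse the second tensor factor onto the line spanned by the characteristic function at the origin while retaining the first factor, identifying $(\tau_1^-)_N$ as a $\Jt \cap \Mt$-module with the restriction of the minimal type of the supercuspidal representation $\omega_{\psi,1}^-$ of $\Spt(W_1)$, tensored with the unramified character by which $\htt_{2\epsilon_2}(t_2)\cdots\htt_{2\epsilon_n}(t_n)$ acts. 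This is genuinely where the even and odd cases diverge: the Levi $\Mt$ is nonabelian, and one must verify that the surviving factor is precisely the type characterizing $\omega_{\psi,1}^-$ among genuine representations of $\Spt(W_1)$. Granting this, the argument closes as before: combining the isomorphism $r$ with Frobenius reciprocity \eqref{E:Frobenius2}, one shows that $(\pi \otimes (\tau_1^-)^*)^{\Jt} \neq 0$ if and only if $\pi$ embeds in $I(\omega_{\psi,1}^- \otimes \chi)$ for some unramified $\chi$, which by the preceding lemma is exactly the condition $\pi \in \G_\psi^-$. Condition (iii) of \cite[3.11]{BK} is thereby verified, giving the equivalence of categories.

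Finally, composing with the Hecke algebra isomorphism $H_\psi^- \cong H^-$ of Theorem \ref{T:isomorphism2} yields the chain
\[
    \SS_0^- \;\cong\; H^-\text{-modules} \;\cong\; H_\psi^-\text{-modules} \;\cong\; \G_\psi^-,
\]
under which the trivial representation of $\SO(V^-)$ corresponds to the trivial module and hence to $\omega_\psi^-$. Because Theorem \ref{T:isomorphism2} identifies these as Hilbert algebras under the stated normalization of Haar measures, Lemma \ref{L:Transfer} transports the Plancherel measures, so the preservation of temperedness and square-integrability follows at once. I expect the Jacquet-module identification of the third paragraph to be the only real difficulty; everything else is a faithful transcription of the even case with $\Omega'_a$ and $\ell'$ in place of $\Omega_a$ and $\ell$.
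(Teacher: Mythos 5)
Your proposal follows essentially the same route as the paper: the surjection $r$ onto $(\pi_N\otimes((\tau_1^-)_N)^*)^{\Jt\cap\Mt}$, injectivity via an invertible $T_\lambda$ attached to a dominant translation in $\Omega'_a$, the identification $(\tau_1^-)_N \cong S(\Ok/2\varpi\Ok)^-\otimes(\tau_{0,n-1})_{U_{n-1}}$ (minimal type of $\omega_{\psi,1}^-$ tensored with the one-dimensional factor), Frobenius reciprocity plus \cite[(3.11)]{BK}, and Lemma \ref{L:Transfer} for temperedness and square-integrability. The point you single out as the main difficulty --- the Jacquet-module computation over the nonabelian Levi --- is exactly the one ingredient the paper makes explicit beyond citing the even case, so the two arguments coincide.
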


\begin{proof}
Let $\Jt_M=\Jt\cap\Mt$ and $\Jt_N = \Jt\cap N$.  As in the previous subsection, 
we define the ``Jacquet module'' $(\tau_1^-)_N$ with respect to $\Jt_N$, which
we view as a representation of $\Jt_M$.  Recall that
	\[
		\tau_1^- = S(\Ok/2\varpi\Ok)^- \otimes S(\Ok^{n-1}/2\Ok^{n-1}),
	\]
and hence, 
	\[
		(\tau_1^-)_N = S(\Ok/2\varpi\Ok)^- \otimes (\tau_{0,n-1})_{U_{n-1}},
	\]
where the second factor is the Jacquet module from the previous subsection in rank $n-1$.
Therefore, $(\tau_1^-)_N$ is an irreducible representation of $\Jt_M$.

We have the natural surjection
\[
r:(\pi\otimes(\tau_1^-)^*)^{\Jt}\rightarrow (\pi_N\otimes (\tau_1^-)_N^*)^{\Jt_M}.
\]
Just as in Proposition \ref{T:equivalence}, one can show
that $r$ is injective, which together with Frobenius reciprocity
shows that $(\pi\otimes(\tau_1^-)^*)^{\Jt}\neq 0$
if and only if $\pi$ is a submodule of $I(\omega_{\psi,1}^- \otimes \chit)$
for some unramified $\chi$. Hence
\cite[(3.11)]{BK} implies the equivalence of the categories.

Finally, this equivalence implies that $(\Jt, \tau_1^-)$ is
an $\mathfrak{s}$-type in the sense of \cite{BHK}, from which one can
deduce the preservation of temperedness and square integrability just as in the previous section.
\end{proof}

%%%%%%%%%%%%%%%%%%%%%%%%%%%%%%%%%%%%%%%%%%%%%%%%%%%%%%%%%%%%%%%%%%%

%\section{Other properties of the correspondences}

%%%%%%%%%%%%%%%%%%%%%%%%%%%%%%%%%%%%%%%%%%%%%%%%%%%%%%%%%%%%%%%%%%%

\begin{Rmk}
As a final remark, let us mention that in \cite[Sec. 15 and 16]{GS2} it is shown that
the theta correspondence preserves unramified Langlands parameters,
which relies on another work \cite{GS1} of Gan and Savin. The only
obstruction to remove the $p \neq 2$ assumption from \cite{GS1}, however,
is the Howe duality conjecture, which was recently proven by Gan and the first-named author in
\cite{GT} for the case at hand. Hence, everything discussed in \cite[Sec. 15 and
16]{GS2} holds without the assumption $p\neq 2$.
\end{Rmk}

\end{document}